\documentclass[12pt,epsfig,amsfonts]{amsart}

\usepackage{amsmath,amssymb, amsthm, epsfig}

\newtheorem*{theorem}{Theorem}
\newtheorem{lemma}{Lemma}[section]

\newtheorem{sublemma}[lemma]{Sublemma}
\newtheorem{remark}[lemma]{Remark}

\newtheorem{definition}[lemma]{Definition}

\setlength{\unitlength}{1cm}

\setlength{\topmargin}{0.13in} \setlength{\textheight}{8.85in}
\setlength{\textwidth}{6.6in} \setlength{\oddsidemargin}{-0.2in}
\setlength{\evensidemargin}{-0.2in} \setlength{\unitlength}{1cm}

\begin{document}

\thispagestyle{empty}

\author{Hiroki Takahasi and Qiudong Wang}
\address{Institute of Industrial Science, The University of Tokyo, Tokyo
153-8505, JAPAN}
 \email{h{\_}takahasi@sat.t.u-tokyo.ac.jp}
\address{Department of Mathematics, University of Arizona,
Tucson, AZ 85721} \email{dwang@math.arizona.edu}

\thanks{The first named author is partially supported by Grant-in-Aid for Young Scientists (B) of the Japan Society for the Promotion of Science (JSPS), Grant No.23740121, and by
Aihara Project, the FIRST Program from JSPS, initiated by the Council for Science and Technology Policy. 
The second named author is partially
supported by a grant from the NSF}

\title{Nonuniformly Expanding 1D Maps With Logarithmic
Singularities}

\begin{abstract}
For a certain parametrized family of maps on the circle with
critical points and logarithmic singularities where derivatives
blow up to infinity, we construct a positive measure set of
parameters corresponding to maps which exhibit nonuniformly
expanding behavior. This implies the existence of ``chaotic" dynamics
in dissipative homoclinic tangles in periodically perturbed differential
equations.
\end{abstract}
\maketitle

\section{Introduction}

Let $f_{a}: {\mathbb R} \to {\mathbb R}$ be such that
\begin{equation}\label{f1-s1}
f_{a}\colon x\mapsto x+a+L\cdot\ln |\Phi(x)|, \ \  L>0,
\end{equation}
where $a \in [0,1]$ and
$\Phi\colon\mathbb R\to\mathbb R$ is $C^2$ satisfying: (i) $\Phi(x+1) = \Phi(x)$; 
(ii) $\Phi'(x)\neq0$ if $\Phi(x)=0$, (iii) $\Phi''(x)\neq0$ if $\Phi'(x)=0$.
The family $(f_{a})$ induces a
parametrized family of maps from $S^1={\mathbb R} /{\mathbb Z}$
to itself. In this paper we study the abundance of nonuniform hyperbolicity 
in this family of circle maps.

Our study of $(f_{a})$ is motivated by the
recent studies of \cite{W,WOk,WOk11,WO} on homoclinic tangles and
strange attractors in periodically perturbed differential
equations. When a homoclinic solution of a dissipative saddle is
periodically perturbed, the perturbation either pulls the stable
and the unstable manifold of the saddle fix point completely
apart, or it creates chaos through homoclinic intersections. In
both cases, the separatrix map induced by the solutions of the
perturbed equation in the extended phase space is a family of
two-dimensional maps. Taking a singular limit, one obtains a
family of one-dimensional in the form of (\ref{f1-s1}) (with the
absolute value sign around $\Phi(x)$ removed). Let $\mu$ be a
small parameter representing the magnitude of the perturbation and
$\omega$ be the forcing frequency. We have $a \sim \omega \ln
\mu^{-1}$ ({\rm mod} $1$), \ $L \sim \omega$; and $\Phi$ is the
classical Melnikov function (See \cite{WOk,WOk11,WO}).

When we start with {\it two} unperturbed homoclinic loops and
assume symmetry, then the separatrix maps are a family of annulus
maps, the singular limit of which is precisely $f_{a}$ in
(\ref{f1-s1}) (See \cite{W}). If the stable and unstable manifolds
of the perturbed saddle are pulled completely apart by the forcing
function, then $\Phi(x) \neq 0$ for all $x$. In this
case we obtain strange attractors, to which the theory of rank one
maps developed in \cite{WY3} apply. If the stable and unstable
manifold intersect, then $\Phi(x) = 0$ is allowed and the
strange attractors are associated to homoclinic intersections. For
the modern theory of chaos and dynamical systems, this is a case
of historical and practical importance; see \cite{GH,SSTC1,SSTC2}.
To this case, unfortunately, the theory of rank one maps in
\cite{WY3} does not apply because of the existence of the
singularities of $f_{a}$. Our ultimate goal is to develop a
theory that can be applied to the separatrix maps allowing
$\Phi(x) = 0$. This paper is the first step, in which we
develop a 1D theory.

For $f = f_{a}$, let $C(f) = \{ f'(x) = 0 \}$ be the set
of critical points and $S(f) = \{ \Phi(x) = 0 \}$ be the set
of singular points.
In this paper we
are interested in the case $L\gg1$.
As $L$ gets larger,
the contracting region gets smaller and the dynamics is more and more expanding in most of the phase space. Nevertheless, the recurrence of the critical points
is inevitable, and thus infinitesimal changes of dynamics occur when $a$ is
varied.
In addition, the logarithmic nature of the singular set $S$ turns out to 
present a new phenomenon \cite{T} which is unknown to occur for smooth one-dimensional maps
with critical points.

Our main result states that nonuniform expansion prevails for ``most" parameters, provided $L\gg1$.
Let $\lambda=10^{-3}$ and
let $|\cdot|$ denote the one-dimensional Lebesgue measure.
\begin{theorem}
For all large $L$ there exists a set $\Delta=\Delta(L) \subset [0,1)$ of
$a$-values with $|\Delta|>0$ such that if $a \in
\Delta$ then for $f=f_a$ and each $c \in C$, $|(f^n)'(fc)|\geq L^{\lambda n}$
holds for every $n\geq 0$. In addition,  
$|\Delta|\to1$ holds as $L \to \infty$.
\end{theorem}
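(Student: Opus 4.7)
The plan is to extend the Benedicks--Carleson parameter-exclusion scheme to the present two-scale setting in which the derivative $f'(x)=1+L\Phi'(x)/\Phi(x)$ degenerates near $C$ and blows up near $S$. For $L$ large, outside any fixed neighborhood of $C\cup S$ the map is already expanding at rate $\gtrsim L\gg L^\lambda$; the entire problem lies in controlling the orbits of $fc$ whenever they enter such neighborhoods. I would fix a small $\delta=\delta(L)\to 0$, partition each $\delta$-neighborhood of a point of $C\cup S$ into dyadic shells $I_r$ of size $\sim e^{-r}$, and declare each iterate to be \emph{free} (outside the neighborhoods, accruing factor $\sim L$) or a \emph{return} of depth $r$ (falling into $I_r$).

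The core dynamical estimate is a bound-period/recovery lemma for each type of return. Near $c\in C$, a quadratic Taylor expansion together with the inductive hypothesis $|(f^k)'(fc)|\geq L^{\lambda k}$ gives, for any $y$ at distance $e^{-r}$ from $c$, a bound period $p\sim r/\log L$ during which $f^j y$ shadows $f^j(fc)$; a direct computation then yields $|(f^p)'(y)|\geq L^{\lambda' p}$ for some $\lambda'>\lambda$, so that the loss $e^{-r}$ at the return is recovered with margin. Near $s\in S$ the situation is almost opposite: a single iterate supplies derivative of order $Le^{r}$, which immediately over-compensates, but the image $f(s+h)\approx s+a+L\ln|\Phi'(s)h|$ lands at an $a$-dependent point whose location modulo $1$ must be tracked separately. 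These two estimates, combined with a standard distortion lemma along itineraries of free and bound pieces, are what the induction requires.

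Next comes the parameter selection. Let $\Delta_n$ be the set of $a$ for which every critical orbit $\{f^k(fc)\}_{k\leq n}$ satisfies both the exponential lower bound $|(f^k)'(fc)|\geq L^{\lambda k}$ and a slow-approach condition $\mathrm{dist}(f^k(fc),C\cup S)\geq e^{-\alpha k}$ for some small $\alpha\ll\lambda$. On each component interval of $\Delta_n$ the distortion lemma provides uniform comparison between $a$-derivatives and $x$-derivatives along $fc$, so the measure of $\Delta_n\setminus\Delta_{n+1}$ is controlled by the Lebesgue measure of the set of phase-space points violating the slow-approach condition at time $n$, which is $\lesssim e^{-\alpha n}$. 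A summable total exclusion then yields $|\Delta|>0$; moreover, since the entire ``bad'' set $\{\mathrm{dist}(\cdot,C\cup S)<\delta(L)\}$ has Lebesgue measure tending to $0$ as $L\to\infty$, the same bookkeeping gives $|\Delta|\to 1$.

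The main obstacle I anticipate is precisely the new phenomenon alluded to in the introduction: since $f$ maps an arbitrarily small neighborhood of any $s\in S$ onto the circle infinitely many times via the $L\ln|\cdot|$ term, a critical orbit that enters such a neighborhood can be flung arbitrarily close to a point of $C$, triggering a long bound period in which the $a$-dependence of the landing position is delicate and the usual distortion argument risks breaking down. Establishing a combined bound-period/distortion estimate that survives consecutive near-$S$/near-$C$ visits, together with a combinatorial encoding of admissible itineraries whose weighted count still yields summable parameter exclusion, is where the genuinely new work relative to the classical Benedicks--Carleson argument will be required.
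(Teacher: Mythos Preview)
Your outline captures the Benedicks--Carleson skeleton correctly, but two of your concrete technical choices would fail, and the paper's proof differs from your plan precisely at these points.

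First, the fixed dyadic partition near $C$ with bound period $p\sim r/\log L$ does not give adequate distortion control here. The distortion quotient along an orbit piece is governed by $|f''|/|f'|\sim 1/(d_C\cdot d_S)$, so every close passage to $S$ during a bound period shrinks the interval on which distortion stays bounded. Consequently the bound period attached to a return $c_\nu\in C_\delta$ cannot be read off from $d_C(c_\nu)$ alone; it must depend on the whole history $\{d_C(c_i),d_S(c_i)\}_{i<\nu}$ of the binding critical orbit. The paper handles this by defining the bound period through the $a$-dependent quantity $D_n(c_0)=L^{-1/2}\bigl(\sum_{i<n}(d_C(c_i)d_S(c_i)/J^i(c_0))^{-1}\bigr)^{-1}$, so that the resulting partition of $C_\delta$ varies with the parameter rather than being a fixed dyadic one. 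Your recovery computation $|(f^p)'(y)|\geq L^{\lambda'p}$ implicitly assumes the binding orbit stays well away from $S$, which is exactly what cannot be taken for granted.

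Second, your symmetric slow-approach condition $\mathrm{dist}(f^k(fc),C\cup S)\geq e^{-\alpha k}$ and the claim that near-$S$ returns ``immediately over-compensate'' miss the issue flagged in the introduction: returns to $S_\sigma$ can occur with high frequency, and while each one helps the derivative, each one also damages distortion. The paper separates the two roles, imposing a recurrence condition (G3) on $d_S$ and a cumulative \emph{deep-return} condition (R) on returns to $C_\delta$; the latter replaces your pointwise slow-approach to $C$ and is what makes the measure estimate go through once the partition is $a$-dependent. Because the partition is $a$-dependent, the paper also abandons the Benedicks--Carleson large-deviation step in favor of Tsujii's covering argument. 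Your final paragraph correctly identifies this cluster of difficulties, but the specific devices you propose earlier (fixed shells, symmetric slow approach, BC-style exclusion) are the ones that have to be replaced.
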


For the maps corresponding to the parameters in $\Delta$, our argument shows a nonuniform expansion, i,e, for Lebesgue a.e. $x\in S^1$,
$\displaystyle{\varlimsup_{n\to\infty}}\frac{1}{n}\ln|(f_a^n)'x|\geq\frac{1}{3}\ln L.$ In addition, combining our argument with an argument 
in [\cite{WY1} Sect.3] one can construct invariant probability 
measures absolutely continuous with respect to Lebesgue measure (acips).
The main difference from the smooth case is to bound distortions, which 
can be handled with Lemma \ref{dist} in this paper.
A careful construction exploiting the largeness
of $L$ shows the uniqueness of acips and some
advanced statistical limit theorems (See \cite{T}).

Since the pioneering work of Jakobson \cite{J}, there have been quite a few 
number of papers over the last thirty years dedicated to proving the abundance 
of chaotic dynamics in increasingly general families of 
smooth one-dimensional maps 
\cite{BC1,BC2,R,TTY,T1,T2,WY1}. 
Families of maps with critical and singular sets 
were studied in \cite{LT,LV,PRV}. One key aspect of the singularities of our maps that 
has no analogue in \cite{LT,LV,PRV} is that, returns 
to a neighborhood of the singular set can happen very frequently. 
The previous arguments seem not sufficient to deal with points like this.
To avoid problems arising from the logarithmic singularities, and to 
get the asymptotic estimate on the measure of $\Delta$,
we introduce new arguments:

\begin{itemize}

\item our definition of bound periods (see Sect.\ref{s2.3}) incorporates the recurrence pattern of the critical orbits to both $C$ and $S$. 
Thus, the resultant bound period partition depends on the parameter $a$,
and is not a fixed partition, as is the case in \cite{BC1,BC2};

\item to get the asymptotic estimate $|\Delta|\to1$ as $L\to1$,
we need to adandon starting an inductive construction with small intervals
around Misiurewicz parameters. Instead we start with a large parameter set (denoted by $\Delta_{N}$),
which is a union of a finite number of intervals.
This necessitates additional works on establishing uniform hyperbolicity outside of a neighborhood of $C$, which is rather straightforward around Misiurewicz
parameters.
\end{itemize}

The rest of this paper consists of two sections.
In Sect.2 we perform phase space analyses. Building on them, in Sect.3
we construct the parameter set $\Delta$ by induction.
To estimate the measure of the set of parameters excluded at each step, 
instead of Benedicks $\&$ Carleson \cite{BC1,BC2} we elect to follow the approach of Tsujii \cite{T1,T2}, primarily because partitions depend on $a$,
and the
extension of this approach is more transparent in our dealing with
the issues related to the singularities. 
Unlike \cite{BC1,BC2}, the current
strategy relies on a geometric structure of the set of parameters excluded 
at each step. In addition, there is
no longer the need for a large deviation argument, introduced
originally in \cite{BC2} as an independent step of parameter exclusions.

\section{Phase space analysis}\label{s2}
In this section we carry out a phase space analysis. 
Elementary facts on $f_{a}$ are introduced in Sect.
\ref{s2.1}. In Sect.\ref{distortion}
we prove a statement on distortion.
In Sect.\ref{s2.2} we discuss an initial set-up.
In Sect.\ref{s2.3} we introduce three conditions, which wil be taken as assumptions of induction for the construction of the parameter set $\Delta$,
and develop a binding argument.
In Sect.\ref{s2.4} we study
global dynamical properties of maps satisfying these conditions.

\subsection{Elementary facts}\label{s2.1}
 For $\varepsilon > 0$, we use
$C_{\varepsilon}$ and $S_\varepsilon$ to denote the
$\varepsilon$-neighborhoods of $C$ and $S$ respectively. The
distances from $x \in S^1$ to $C$ and $S$ are denoted
as $d_C(x)$ and $d_S(x)$ respectively. We take $L$ as a
base of ${\rm log}(\cdot)$.

\begin{lemma}\label{derivative}
There exist $K_0>1$ and $\varepsilon_0 >0$ such that the following holds 
for all sufficiently large $L$ and $f=f_{a}$:

\begin{itemize}
\item[(a)] for all $x\in S^1$,
$$ K_0^{-1}L
\frac{d_C(x)}{d_S(x)} \leq|f'x| \leq K_0L
\frac{d_C(x)}{d_S(x)}, \ \ \ \ \ \  |f''x| \leq
\frac{K_0L}{d^2_S(x)};
$$

\item[(b)] for all $\varepsilon
>0$ and $x \not \in C_{\varepsilon}$,
$ |f'x|\geq K_0^{-1} L\varepsilon$; 

\item[(c)] for all $x \in C_{\varepsilon_0}$, $K^{-1}_0 L < |f''
x| < K_0 L$.
\end{itemize}
\end{lemma}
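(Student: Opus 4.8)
The plan is to compute $f_a'$ and $f_a''$ directly from \eqref{f1-s1} and then translate the resulting expressions into the distance functions $d_C$ and $d_S$. Differentiating $f_a(x) = x + a + L\ln|\Phi(x)|$ gives $f_a'(x) = 1 + L\Phi'(x)/\Phi(x)$ and $f_a''(x) = L\bigl(\Phi''(x)\Phi(x) - \Phi'(x)^2\bigr)/\Phi(x)^2$. Since $\Phi$ is $C^2$ and $1$-periodic, and since by hypotheses (ii)–(iii) the zeros of $\Phi$ are simple (so $\Phi'\neq0$ there) and the zeros of $\Phi'$ are nondegenerate (so $\Phi''\neq0$ there), the sets $S=\{\Phi=0\}$ and $C=\{f_a'=0\}$ are finite; moreover $C$ is contained in a small neighborhood of $\{\Phi'=0\}$ once $L$ is large, because $f_a'(x)=0$ forces $\Phi'(x)/\Phi(x) = -1/L$ to be small. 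The heart of the matter is the two-sided comparison $|f_a'(x)| \asymp L\, d_C(x)/d_S(x)$, which I would prove by splitting $S^1$ into the region near $S$, the region near $C$, and the complementary region where both $|\Phi|$ and $|\Phi'|$ are bounded below.

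First I would handle the behavior near $S$. Near a simple zero $s$ of $\Phi$ we have $|\Phi(x)| \asymp |x-s| \asymp d_S(x)$ and $|\Phi'(x)| \asymp 1$ by a Taylor expansion, so $|\Phi'(x)/\Phi(x)| \asymp 1/d_S(x)$, which dominates the $1$ in $f_a'(x) = 1 + L\Phi'(x)/\Phi(x)$ once we are close enough to $S$; hence $|f_a'(x)| \asymp L/d_S(x)$. Since $d_C(x) \asymp 1$ on this region (the critical points sit near $\{\Phi'=0\}$, which is disjoint from $S$), this matches the claimed bound. Next, near $C$: writing $c$ for a critical point, one has $f_a'(c)=0$, and by part (c) — which I prove below — $|f_a''|$ is comparable to $L$ throughout a fixed neighborhood $C_{\varepsilon_0}$, so by the mean value theorem $|f_a'(x)| = |f_a'(x) - f_a'(c)| \asymp L\,|x-c| \asymp L\,d_C(x)$; and since $d_S(x)\asymp1$ there, this is again the asserted estimate. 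For the complementary region, both $|\Phi|$ and $|\Phi'|$ are bounded above and below by constants, so $|\Phi'/\Phi|$ is bounded, $|f_a'(x)| = |1 + L\Phi'(x)/\Phi(x)|$ is between constant multiples of $L$ (using that we are away from $C$, so the expression does not cancel), and both $d_C(x)$ and $d_S(x)$ are $\asymp1$. Absorbing all the finitely many comparison constants into a single $K_0$ and choosing $\varepsilon_0$ small enough to make the "near $C$" analysis valid yields (a). The second derivative bound $|f_a''(x)| \le K_0 L/d_S^2(x)$ is immediate from $f_a''(x) = L(\Phi''\Phi - (\Phi')^2)/\Phi^2$ together with $|\Phi(x)| \asymp d_S(x)$ near $S$ and $|\Phi(x)|\gtrsim1$ elsewhere, using that $\Phi,\Phi',\Phi''$ are all bounded.

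Part (b) follows from (a): if $x \notin C_\varepsilon$ then $d_C(x) \ge \varepsilon$, while $d_S(x) \le 1$ trivially (distances on $S^1$ are at most $1/2$, say), so $|f_a'(x)| \ge K_0^{-1} L\, d_C(x)/d_S(x) \ge K_0^{-1} L\varepsilon$. For part (c), I would use the Taylor expansion of $\Phi$ near a nondegenerate zero $x_0$ of $\Phi'$: there $\Phi(x_0)\neq0$ by (ii) (since $\Phi'(x_0)=0$ means $x_0\notin S$), $\Phi'(x) = \Phi''(x_0)(x-x_0) + o(x-x_0)$ with $\Phi''(x_0)\neq0$, and $\Phi(x) = \Phi(x_0) + O(x-x_0)$ bounded away from $0$; plugging into $f_a''(x) = L(\Phi''\Phi - (\Phi')^2)/\Phi^2$, the numerator tends to $\Phi''(x_0)\Phi(x_0)\neq0$ and the denominator to $\Phi(x_0)^2\neq0$ as $x\to x_0$, so $f_a''(x)/L$ is bounded above and below by positive constants on a neighborhood of $x_0$. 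Since every critical point lies (for $L$ large) in such a neighborhood — $f_a'(x)=0$ forces $\Phi'(x)/\Phi(x)=O(1/L)$, hence $x$ close to $\{\Phi'=0\}$ — we get (c) on a uniform $\varepsilon_0$-neighborhood $C_{\varepsilon_0}$.

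The main obstacle is bookkeeping rather than any single hard estimate: one must verify that the various "sufficiently close to $S$," "sufficiently close to $C$," and "bounded away from both" regions actually cover $S^1$ with overlaps controlled \emph{uniformly in $L$ and $a$}, and that the critical set $C=C(f_a)$, which depends on both $L$ and $a$, stays within a fixed $\varepsilon_0$-neighborhood of the $a$-independent, $L$-independent set $\{\Phi'=0\}$ so that the constants $K_0,\varepsilon_0$ can be chosen once and for all. The key point making this work is that $\Phi$ itself does not depend on $a$ or $L$, so all the Taylor-expansion constants come from $\Phi$ alone; the only $L$-dependence enters through the explicit factor $L$ and the fact that the $1$ in $f_a' = 1 + L\Phi'/\Phi$ is negligible wherever $|\Phi'/\Phi|$ is large, which is exactly the regime away from $C$.
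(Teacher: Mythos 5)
Your proof is correct and follows the same route as the paper: the paper's proof simply records the formulas $f'=1+L\Phi'/\Phi$ and $f''=L(\Phi''\Phi-(\Phi')^2)/\Phi^2$ and remarks that the lemma follows from the stated hypotheses on $\Phi$, whereas you spell out the region-by-region comparison and the uniformity in $a$ and $L$ that that remark is tacitly invoking. Your forward reference to part~(c) in establishing~(a) near $C$ is harmless since~(c) is independent of~(a), but it would read more cleanly to prove~(c) first and then~(a).
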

\begin{proof} This lemma follows immediately from
$$
f'=1+L \cdot \frac{\Phi'}{\Phi};
 \ \ \ \ \
f''=L \cdot \frac{\Phi''\Phi-
(\Phi')^2}{\Phi^2}
$$
and our assumptions on $\Phi$ in the beginning of the introduction.
\end{proof}

\subsection{Bounded distortion}\label{distortion}
Let $c\in C$, $c_0=fc$, and $n\geq1$. Let
\begin{equation}\label{Theta}
D_n(c_0)=\frac{1}{\sqrt{L}}\cdot\left[\sum_{i=0}^{n-1}
d_i^{-1}(c_0)\right]^{-1} \ \ \ \ \text{where} \ \ \ \ \
d_i(c_0)=\frac{d_C(c_i)\cdot
d_S(c_i)}{J^i(c_0)}.
\end{equation}
\begin{lemma}\label{dist}
For all $x,y \in[c_0-D_n(c_0),c_0+D_n(c_0)]$ we
have
$J^n(x)\leq 2J^n(y),$
provided that $c_i\notin C\cup S$ for every $0\leq i<n$.
\end{lemma}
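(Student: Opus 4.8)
The plan is to estimate the distortion $\ln\bigl(J^n(x)/J^n(y)\bigr) = \sum_{i=0}^{n-1}\ln\bigl(J(x_i)/J(y_i)\bigr)$ along orbits, where $x_i=f^i(x)$, $y_i=f^i(y)$, by controlling at each step the relative size of $|x_i-y_i|$ against $d_C(x_i)$ and $d_S(x_i)$. First I would write $\ln|f'x_i| - \ln|f'y_i|$ and, using the formula $f'=1+L\Phi'/\Phi$ from Lemma \ref{derivative}, bound it by the mean-value theorem in the form $|x_i-y_i|\cdot\sup|(\ln|f'|)'|$ over the segment between $x_i$ and $y_i$; by the estimates in Lemma \ref{derivative}(a) this sup is $O\bigl(1/(d_C(\cdot)d_S(\cdot))\bigr)$ on that segment (the $d_C$ comes from the zero of $f'$, the $d_S$ from the pole), provided the segment stays away from $C\cup S$, which is guaranteed by the hypothesis $c_i\notin C\cup S$ together with the fact that $|x_i-y_i|$ is much smaller than $d_C(c_i),d_S(c_i)$.

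The key inductive step is to show that, for $x,y$ within $D_n(c_0)$ of $c_0$, the pushed-forward distance satisfies $|x_i-y_i|\le 2 J^i(c_0)\,|x-y|$ for all $i<n$, hence $|x_i-y_i|\le 2 J^i(c_0) D_n(c_0)$. Granting this, each distortion term is bounded by
\[
\frac{|x_i-y_i|}{\tfrac14 d_C(c_i)\,d_S(c_i)} \;\le\; \frac{8\,J^i(c_0)\,D_n(c_0)}{d_C(c_i)\,d_S(c_i)} \;=\; \frac{8\,D_n(c_0)}{d_i(c_0)},
\]
and summing over $i=0,\dots,n-1$ and invoking the definition \eqref{Theta} of $D_n(c_0)$ gives $\sum_i \le 8 D_n(c_0)\sum_i d_i^{-1}(c_0) = 8/\sqrt L \le \ln 2$ once $L$ is large. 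This closes the bound $J^n(x)\le 2 J^n(y)$. The factor-$2$ control of distances is itself proved by a simultaneous induction: if $J^i(x)/J^i(c_0)$ and $J^i(y)/J^i(c_0)$ are both within a factor $2$ (which follows from the partial distortion sum being $\le \ln 2$ up to step $i$), then $|x_{i+1}-y_{i+1}|\le \sup|f'|\cdot|x_i-y_i|\le 2 J(c_i)|x_i-y_i|$, and one also has to check that the segment $[x_i,y_i]$ does not meet $C\cup S$ — this holds because $|x_i-y_i|$ is a small fraction of $\min\{d_C(c_i),d_S(c_i)\}$, again by the summability built into $D_n$.

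The main obstacle is the logarithmic singularity: near $S$ the derivative $f'$ blows up and $f''$ blows up like $d_S^{-2}$, so the naive distortion term $|f''/f'|$ is as large as $d_S^{-1}$, and one must be sure that the orbit segment between $x_i$ and $y_i$ never crosses into the region where these bounds degenerate or, worse, straddles a singular point (where $\ln|f'|$ is unbounded). The weight $d_S(c_i)$ in the denominator of $d_i(c_0)$ is precisely what is engineered to absorb this, and making the bookkeeping tight — in particular verifying that $|x_i-y_i|\ll d_S(c_i)$ so that $d_S$ and $d_C$ along the whole segment are comparable to $d_S(c_i)$ and $d_C(c_i)$ — is the delicate point; everything else is the standard bounded-distortion telescoping argument.
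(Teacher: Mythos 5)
Your proposal is correct and follows essentially the same route as the paper's proof: telescope $\ln(J^n(x)/J^n(y))$, bound each term by $|f^iI|\cdot\sup|f''/f'|$ using Lemma \ref{derivative}(a), and run the induction (equivalently, your ``simultaneous induction'') showing $|f^iI|\leq 2J^i(c_0)D_n(c_0)\leq (2/\sqrt L)\,d_C(c_i)d_S(c_i)$, which both keeps $f^iI$ clear of $C\cup S$ and makes the distortion sum telescope to $O(1/\sqrt L)$ via the definition of $D_n$. The only cosmetic difference is that you phrase the inductive control in terms of $|x_i-y_i|$ rather than $|f^iI|$, and you allow a slightly looser constant ($8/\sqrt L$ vs.\ the paper's $2K_0^2/\sqrt L$), neither of which changes the argument.
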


\begin{proof} Write $D_n$ for $D_n(c_0)$, and let $I
=[c_0-D_n,c_0+D_n]$. Then
$$
\log \frac{J^n(x)}{J^n(y)}=\sum_{j=0}^{n-1}\log \frac{J(f^jx)}{J(f^jy)} \leq\sum_{j=0}^{
n-1}|f^jI| \sup_{\phi\in f^jI}\frac{|f''\phi|}{|f' \phi|}.
$$
Lemma \ref{dist} would hold if for all $j \leq n-1$ we have
$f^jI\cap (S\cup C)=\emptyset$ and
\begin{equation}\label{disteq}
|f^jI|\sup_{\phi\in f^jI}\frac{|f''\phi|}{|f' \phi|}\leq
\log 2 \cdot d_j^{-1}(c_0) \left[\sum_{i=0}^{
n-1}d_i^{-1}(c_0)\right]^{-1}.
\end{equation}
We prove (\ref{disteq}) by induction on $j$. Assume (\ref{disteq})
holds for all $j < k$. Summing (\ref{disteq}) over $j=0,1,\cdots,
k-1$ implies
$
\frac{1}{2} \leq \frac{J^k(\eta)}{J^k(c_0)}\leq 2$
for all $\eta \in I$. We have
\begin{equation}\label{derivative6}
|f^{k}I|\leq 2 J^{k}(c_0) D_n = 2d_{k}^{-1}
\cdot d_C(c_k) d_S(c_k) D_n \leq
\frac{2}{\sqrt{L}}d_C(c_k)d_S(c_k).
\end{equation}
We have $f^k I \cap (C\cup S)
= \emptyset$ from (\ref{derivative6}), and for $\phi\in f^k
I$,
\begin{align*}
|f^kI|\frac{|f''\phi|}{|f'\phi|} &\leq 2
d_k^{-1}  d_C(c_k) d_S(c_k) D_n \cdot \frac{K_0^2}{d_C(\phi) d_S(\phi)} \\
& = 2K_0^2 d_k^{-1}D_n  \cdot \frac{d_C(c_k)
d_S(c_k) }{d_C(\phi) d_S(\phi)}
\leq\frac{2K_0^2}{\sqrt{L}}\cdot d_{k}^{-1} \left[\sum_{i=0}^{n-1}
d_i^{-1}\right]^{-1},
\end{align*}
where we used Lemma \ref{derivative}(a)
for $\frac{|f''\phi|}{|f'\phi|}$ for the first inequality. For
the last inequality we observe that the second factor of the left-hand-side
is $< 2$
by (\ref{derivative6}).  \end{proof}

\subsection{Initial setup}\label{s2.2}
In one-dimensional dynamics, a general strategy for 
constructing positive measure sets of 
``good" parameters is to start an inductive construction
in small parameter intervals, in which orbits of critical points 
are kept out of bad sets for certain number of iterates.
One way to find these intervals is to first look for
Misiuriewicz parameters, for which all critical orbits stay out of
the bad sets under any positive iterate. We would then
confine ourselves in small parameter intervals containing 
the Misiuriewicz parameters, and would eventually prove that the 
Misiuriewicz parameters are Lebesgue density points of the good parameter 
sets. This approach for initial set-ups, however, is with
some drawbacks. First, for a one-parameter family of 
maps with multiple critical points, the Misiuriewicz parameters are relatively 
hard to find because of the need of controlling multiple critical orbits 
with one parameter. Though the argument in \cite{WY2} is readily extended to
cover our family, we are nevertheless up to a
hard start. Second, with the rest of the study confined in a small
parameter interval containing a Misiuriewicz parameter, 
it is not clear how we could
prove the global asymptotic measure estimate ($|\Delta| \to 1$ as
$L \to \infty$) of the theorem.

An alternative route that is made possible by the approach of this
paper is to start with a rather straight forward and
relatively weak assumption. 
Let $\sigma = L^{-\frac16}$.  Let $N$ 
be a large integer independent of $L$. 
For $0\leq n\leq N$, let
$$
\Delta_n =\{a\in [0,1)\colon f_a^{i+1}(C)\cap(C_\sigma\cup
S_\sigma)=\emptyset\text{ for every }0\leq i\leq n\}.
$$
Observe that $\Delta_n$ is a union of intervals.
We start with the following statement, the proof of which is given in Appendix.
\begin{lemma}\label{initial-1}
For any large integer $N$ 
there exists $L_0 = L_0(N)\gg1$ 
such that if $L\geq L_0$, then $
|\Delta_{N}|\geq 1  - L^{-\frac19}.
$
\end{lemma}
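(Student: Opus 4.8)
}

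The plan is to show that for each fixed $0\le i\le N$, the set of parameters $a$ for which $f_a^{i+1}(C)$ comes within $\sigma$ of $C\cup S$ has Lebesgue measure bounded by a quantity that goes to $0$ fast enough as $L\to\infty$, and then to sum these bounds over the finitely many $i$'s. Since $N$ is fixed independent of $L$, a bound of the form $O(N\cdot L^{-\beta})$ with $\beta>\tfrac19$ for each step, for $L$ large depending on $N$, will yield $|\Delta_N|\ge 1-L^{-1/9}$. The quantitative input is that the image of $C$ under $f_a$ varies with $a$ at definite speed, while the maps $x\mapsto f_a^{i+1}(x)$, for the relevant $x$, have derivatives that are large (comparable to powers of $L$) as long as the orbit has stayed out of $C_\sigma\cup S_\sigma$, so that a $\sigma$-neighborhood in phase space pulls back to a parameter interval of length at most $\sigma$ times a factor that is under control.

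The key steps, in order, are as follows. First, parametrize the finitely many critical points; note $C$ is finite (the zeros of $\Phi'$, which by assumption (iii) are nondegenerate) and $S$ is finite likewise, and $\#C,\#S$ are bounded independently of $L$. Second, establish a transversality/speed estimate: as $a$ varies, each $c_{i+1}(a):=f_a^{i+1}(c)$ moves with $\partial_a c_{i+1}(a)$ bounded below — here one uses that $\partial_a$ applied to one application of $f_a$ contributes $+1$, and that the subsequent derivatives $|(f_a^{i})'|$ along the orbit segment are $\ge (K_0^{-1}L\sigma)^i$ by Lemma \ref{derivative}(b) as long as the orbit avoids $C_\sigma$, so the cumulative motion dominates; conversely one needs an upper bound on $\partial_a c_{i+1}$ of the same controlled order so that the preimage of a $\sigma$-ball is an interval of length comparable to $\sigma/(\text{something}\ge 1)$, in any case $\lesssim \sigma\cdot (\text{const})^{i}$. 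Third, for each $i$ and each pair (critical point $c$, target point $p\in C\cup S$), the bad parameter set $\{a: |c_{i+1}(a)-p|<\sigma\}$ is covered by a bounded number of intervals each of length $\lesssim \sigma\cdot K^{i}$ for some constant $K$; summing over the $O(1)$ choices of $c$, the $O(1)$ choices of $p$, and $i=0,\dots,N$ gives $|[0,1)\setminus\Delta_N|\le C(N)\,\sigma = C(N) L^{-1/6}$. Fourth, choose $L_0=L_0(N)$ so large that $C(N)L^{-1/6}\le L^{-1/9}$ for all $L\ge L_0$, which is possible since $L^{-1/6}/L^{-1/9}=L^{-1/18}\to0$.

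The main obstacle is the second step: controlling how fast critical images move with the parameter when the orbit, though kept out of $C_\sigma\cup S_\sigma$ for the first $N$ iterates, may pass close to $S$ where $|f_a'|$ is large and $|f_a''|$ is very large (of order $L/d_S^2$ by Lemma \ref{derivative}(a)). One must verify that the derivative $\partial_a c_{i+1}(a)$ does not oscillate wildly — i.e., that it stays within a bounded ratio of $\prod |f_a'(c_j)|$ — which requires a distortion-type estimate along the parameter-dependent orbit. This is exactly analogous to, but must be carried out independently of, Lemma \ref{dist}: one differentiates the recursion $c_{i+1}(a)=f_a(c_i(a))$ in $a$, obtaining $\partial_a c_{i+1}=1+f_a'(c_i)\partial_a c_i$, and must show by induction that the accumulated correction terms are summable, using that between returns the orbit is hyperbolic with rate a power of $L$ and that each visit near $S$ contributes only a bounded multiplicative error because $\sigma=L^{-1/6}$ is not too small relative to $L$. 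Once this parameter-distortion bound is in hand, the measure estimate is routine; because $N$ is fixed while $L\to\infty$, even a crude exponential-in-$N$ constant is harmless.
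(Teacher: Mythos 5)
Your overall strategy coincides with the paper's: induct over $n\le N$, control the parameter derivative $\tau_n=\partial_a c_n$ via a transversality estimate, obtain a parameter-space distortion estimate analogous to Lemma~\ref{dist}, and use these to bound the bad-parameter measure per step; and you correctly flag the parameter-distortion bound as the crucial ingredient. This is exactly what Lemma~\ref{lem1-appA}(c),(d) supplies, mirroring Lemmas~\ref{trans} and~\ref{samp} from the main text.

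Your third step, however, is wrong as stated and hides the genuine work. Since $|\partial_a c_{i+1}|\gtrsim(K^{-1}L\sigma)^i$ on $\Delta_{i-1}$, the map $a\mapsto c_{i+1}(a)$ wraps around the circle on the order of $(L\sigma)^i$ times over a single component of $\Delta_{i-1}$, so the preimage of a $\sigma$-ball consists of a number of intervals growing like a power of $L$ --- not a bounded number --- each of length roughly $\sigma/(L\sigma)^i$, not $\sigma K^i$. To conclude that these many small intervals have total measure $\lesssim\sigma$ you would also need \emph{two-sided} distortion control of $\tau_i$ uniformly over all of $\Delta_{i-1}$, which the paper does not prove and does not need; it proves distortion only on the short intervals $I_n(a)$. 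The paper's actual mechanism is Lemma~\ref{lem1-appA}(e): the image $\gamma_n^{(c)}(I_n(a_*,c))$ has length $\ge L^{1/7}\sigma$, so combined with (c),(d) the bad fraction inside each $I_n(a)$ is $\lesssim\sigma/(L^{1/7}\sigma)=L^{-1/7}$; a Vitali-type disjointness claim (the two bullets in the appendix) then gives $|E_n(c,s)|<L^{-1/8}$, hence a per-step bound $\sim L^{-1/8}$, \emph{worse} than your claimed $\sigma=L^{-1/6}$. The discrepancy does not affect the conclusion --- both are $\ll L^{-1/9}/N$ once $L$ is large relative to $N$ --- but your step three, as written, is not a proof: the ``bounded number of intervals'' claim is false and must be replaced by the covering/expansion argument described above.
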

Lemma \ref{initial-1} is sufficient for us to move forward. This
approach for initial setups is easier, and leads to the
desired asymptotic measure estimate on $\Delta$ as $L\to\infty$.

\smallskip

We move to the expanding property of the maps corresponding to parameters in
$\Delta_{N}$.  
We frequently use the following notation: for $c\in C$ and $n\geq1$,
$c_0=fc$ and $c_n=f^nc_0$: for $x\in S^1$ and $n\geq1$,
$J(x)=|f'x|$ and $J^n(x)=J(x)J(fx)\cdots J(f^{n-1}x)$.

Let $\alpha=10^{-6}$ and $\delta = L^{-\alpha N}$.
In what follows, we suppose $N$ to be a large integer for which 
$\delta\ll\sigma$, and
the conclusion of Lemma \ref{initial-1} holds. The value of $N$ will be replaced if necessary, but only a finite number of times.
The letter $K$ will be used to denote
generic constants which are independent of $N$ and $L$.

The next lemma, the proof of which is given in Appendix,
ensures an exponential growth of derivatives for 
orbit segments lying outside of $C_{\delta}$. 
\begin{lemma}\label{outside}
There exists $L_1\geq L_0$
such that if $L\geq L_1$ and $f = f_{a}$ is such that $a \in \Delta_{N}(L)$,
then the following holds:
\begin{itemize}
\item[(a)] if $n\geq1$
and $x$, $fx,\cdots, f^{n-1}x\notin C_{\delta}$,
then $J^{n}(x)\geq \delta L^{2\lambda n}$; 
\item[(b)] if
moreover $f^nx\in C_{\delta}$,  then $J^{n}(x) \geq
L^{2 \lambda n}$.
\end{itemize}
\end{lemma}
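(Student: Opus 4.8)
The plan is to work with the distortion control from Lemma \ref{dist}, bootstrapped from the good behavior guaranteed by membership in $\Delta_N$, and to split any orbit segment avoiding $C_\delta$ into blocks according to returns near the singular set $S$. First I would record two consequences of Lemma \ref{derivative}: outside $C_\delta$ one has $|f'x|\geq K_0^{-1}L\delta = K_0^{-1}L^{1-\alpha N}$, which is a huge expansion factor per step once $L$ is large (since $\alpha N$ is tiny and $L\to\infty$), so isolated steps away from $C_\delta$ contribute far more than $L^{2\lambda}$ each; and near $S$ the derivative blows up, so returns close to $S$ only help. The only real danger is a long run of steps that repeatedly graze $S_\sigma$ without ever landing in $C_\delta$: there the factor $d_C/d_S$ could in principle be moderate because $d_S$ is small while $d_C$ is only of order $\delta$. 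The definition of $\Delta_N$ is precisely what rules out the worst such runs for the first $N$ iterates, and one must propagate this.

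The key steps, in order: (1) For $n\leq N$, use that $a\in\Delta_N$ directly: the orbit of $C$ stays out of $C_\sigma\cup S_\sigma$, hence $d_C(c_i),d_S(c_i)\geq\sigma=L^{-1/6}$, so each factor $|f'|\geq K_0^{-1}L\cdot d_C/d_S$; combined with the finitely many terms this gives the bound with room to spare, and establishes the base of an induction. (2) For the inductive step $n>N$, decompose the orbit segment $x,fx,\dots,f^{n-1}x$ at the times it enters $S_\sigma$. Between consecutive such times the orbit is outside both $C_\delta$ and $S_\sigma$, where Lemma \ref{derivative}(b) gives $|f'|\geq K_0^{-1}L\sigma = K_0^{-1}L^{5/6}\gg L^{2\lambda}$, so these free segments are fine individually. (3) At an entry into $S_\sigma$, say at a point $z$ with $d_S(z)=t<\sigma$, one must show the loss $\log(1/t)$ incurred over the next few steps is recovered: here I would invoke that $|f'|$ is of order $Ld_C(z)/t$, so if $d_C(z)$ is not too small the single step at $z$ already produces a factor $\gtrsim L\delta/t$; and to control the case $d_C(z)$ small one uses that the orbit must then be near a critical point but still outside $C_\delta$, i.e. $\delta\leq d_C(z)$, bounding the step below by $L\delta/t \geq L^{1-\alpha N}/\sigma$ which still dominates. (4) Summing the $\log|f'|$ over the whole segment, the free portions contribute at least $(\text{length})\cdot(\frac56\log L - \log K_0)$ and the singular returns contribute a bounded-below amount, yielding $J^n(x)\geq \delta L^{2\lambda n}$; the extra factor of $\delta$ is a slack term absorbing the at most one ``bad'' initial step, and it disappears in part (b) because landing in $C_\delta$ at time $n$ means the last free segment ends with an extra near-tangency to $C$ that, by Lemma \ref{derivative}, costs nothing and in fact the preceding expansion has $\delta$ of margin to upgrade $\delta L^{2\lambda n}$ to $L^{2\lambda n}$.

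The main obstacle I expect is step (3): bounding the derivative recovery after a close approach to $S$ without any bound-period machinery (that machinery is introduced later in Sect.\ref{s2.3}, so it cannot be used here). The argument has to exploit that, for $a\in\Delta_N$ and $L$ large, an orbit simply cannot come too close to $S$ too soon — the threshold $\sigma=L^{-1/6}$ versus $\delta=L^{-\alpha N}$ is chosen with $\delta\ll\sigma$ precisely so that one near-singular visit, costing at most $\log(1/\delta)=\alpha N\log L$, is dwarfed by the per-step gain $\frac56\log L$ from even a single free step. So the quantitative heart of the proof is a careful accounting showing that between any two visits to $S_\sigma$ there is enough ``free'' expansion to pay off the logarithmic debt, and that the cumulative debt over $n$ steps never outruns $2\lambda n\log L$ with the stated $\delta$-slack. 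I would carry this out by a clean telescoping estimate on $\sum\log|f'(f^ix)|$ rather than by tracking intervals, deferring the interval-distortion statement of Lemma \ref{dist} to where it is genuinely needed (the parameter exclusion argument), and using only Lemma \ref{derivative} here.
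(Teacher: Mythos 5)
The proposal misidentifies where the difficulty actually lies, and the route it sketches cannot close the gap. Two concrete problems.

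First, the stated ``real danger''---repeated grazing of $S_\sigma$---is not a danger at all. By Lemma~\ref{derivative}(a), $|f'x|\geq K_0^{-1}L\,d_C(x)/d_S(x)$: $d_S$ sits in the \emph{denominator}, so small $d_S$ makes $|f'|$ large. Close approaches to $S$ only help the derivative, and there is no ``logarithmic debt'' to pay off. The genuine danger is the opposite one: close approaches to $C$ that stay just outside $C_\delta$. There $|f'x|$ can be as small as $\approx K_0^{-1}L\delta=K_0^{-1}L^{1-\alpha N}$, and because the argument must work for $N$ arbitrarily large (the combinatorics of Sect.~\ref{R1} require $\beta(N)$ small; and already Lemma~\ref{cover} needs $\delta<\delta_0=L^{-11/12}$, i.e.\ $\alpha N>11/12$), $\alpha N$ is \emph{not} tiny and can exceed $1$. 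In that regime $L\delta<1$: the step-by-step bound outside $C_\delta$ is \emph{contracting}, and your telescoping sum $\sum\log|f'(f^ix)|$ using only Lemma~\ref{derivative}, with no shadowing, gives nothing. This is exactly why the paper refuses to use Lemma~\ref{derivative} alone and instead introduces a coarser threshold $\delta_0=L^{-11/12}$ and a bound/free structure relative to $C_{\delta_0}$: visits to $C_{\delta_0}\setminus C_\delta$ are treated as bound periods of length $\leq N$ (Lemma~\ref{cover}, which is where $a\in\Delta_N$ is actually used), with a recovery estimate $J^p(y)\geq L^{p/300}$ (Lemma~\ref{initial}), while iterates outside $C_{\delta_0}$ contribute the safe factor $K^{-1}L\delta_0=K^{-1}L^{1/12}$. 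Your proposal explicitly defers the distortion/shadowing machinery, but it is indispensable here.

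Second, your step (1) misapplies the hypothesis $a\in\Delta_N$. Membership in $\Delta_N$ constrains the \emph{critical} orbits $f^{i+1}(C)$ for $0\leq i\leq N$; it says nothing about the arbitrary orbit of $x$ in the statement of the lemma, so ``for $n\leq N$ use $a\in\Delta_N$ directly'' does not get off the ground. The correct use of $\Delta_N$ is indirect, through the binding: a point in $C_{\delta_0}\setminus C_\delta$ shadows a critical orbit, and $\Delta_N$ guarantees that critical orbit stays out of $C_\sigma\cup S_\sigma$ long enough (for at least $N$ steps) to finish the bound period. Relatedly, your explanation of the extra $\delta$ in part (a) is off: it is not slack for an initial step, but the worst-case loss when $f^nx$ lands in the \emph{middle} of a bound period so that the last entry into $C_{\delta_0}$ has not yet been recovered; if $f^nx$ is free (in particular if $f^nx\in C_\delta$), the last bound period has closed and the $\delta$ disappears, which is part~(b).
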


\noindent {\it Standing assumption for the rest of this section:  \ $L\geq L_1$ and $a \in \Delta_{N}$.}

\subsection{Recovering expansion}\label{s2.3}
For $f = f_a$, $c \in C$ and $n>N$ we introduce three conditions:

\begin{itemize}

\item[(G1$)_{n,c}$] $J^{j-i}(c_i)\geq L \min\{\sigma, L^{-\alpha
i}\}\ \ 0\leq \forall i<\forall j\leq n+1;$

\item[(G2$)_{n,c}$]  $J^i(c_0)\geq L^{\lambda i}\ \ 0<
\forall i\leq n+1;$

\item[(G3$)_{n,c}$] $d_S(c_i)\geq L^{-4\alpha i}\ \ N
\leq \forall i\leq n$.

\end{itemize}

We say $f$ satisfies (G1$)_n$ if (G1$)_{n,c}$ holds for every $c\in C$.
The definitions of (G2)$_n$, (G3)$_n$ are analogous. These conditions are taken as inductive assumptions in the construction of the parameter set $\Delta$.

We establish a recovery estimate of expansion.
Let
$c\in C$, $c_0=fc$ and assume that (G1)$_{n,c}$-(G3)$_{n,c}$. For $p\in[2,n]$, let
$$I_p(c)=\begin{cases} &f^{-1}[c_0+D_{p-1}(c_0),c_0+D_p(c_0))\ \text{ if $c$ is a local minima of $x\to x+a+L\cdot\ln|\Phi(x)|$};\\
& f^{-1}(c_0-D_{p}(c_0),c_0-D_{p-1}(c_0)]\ \text{ if $c$ is a local maxima of $
x\to x+a+L\cdot\ln|\Phi(x)|$}.
\end{cases}$$
By the non-degeneracy of $c$, $I_p(c)$ is the union of two intervals, one at the right of $c$ and the other at the left.
According to Lemma \ref{dist}, if $x\in I_p(c)$ then the derivatives along the orbit of
$fx$ shadow that of the orbit of $c_0$ for $p-1$ iterates. 
We regard the orbit of $x$ as been bound to the
critical orbit of $c$ up to time $p$; and we call $p$ the {\em
bound period} of $x$ to $c$.

\begin{lemma}\label{reclem1}
If
(G1)$_{n,c}$-(G3)$_{n,c}$ holds, then for $p\in[2,n]$ and $x\in
I_{p}(c)$ we have:
\begin{itemize}
\item[(a)] $p\leq\log |c-x|^{-\frac{2}{\lambda}}$;

\item[(b)] if $x\in C_\delta$, then $J^p(x)|\geq
|c-x|^{-1+\frac{16\alpha}{\lambda}}\geq L^{\frac{\lambda}{3}p}$.
\end{itemize}
\end{lemma}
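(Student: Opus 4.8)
The plan is to exploit the binding mechanism: for $x\in I_p(c)$, Lemma \ref{dist} says $J^{p-1}(fx)$ is comparable (within a factor $2$) to $J^{p-1}(c_0)$, so the growth of the derivative along the orbit of $x$ for the first $p$ iterates is, up to constants, $|f'x|\cdot J^{p-1}(c_0)$. We then need two estimates: a lower bound on $|f'x|$ in terms of $|c-x|$, and a lower bound on $p$ in terms of $|c-x|$ coming from the definition of the bound period. The strategy in more detail:

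\textbf{Step 1: translate $x\in I_p(c)$ into a size estimate for $|c-x|$.} By the non-degeneracy of $c$ (Lemma \ref{derivative}(c), $|f''|\asymp L$ near $c$) and the definition of $I_p(c)$ via $f^{-1}$ of the annulus between $c_0+D_{p-1}$ and $c_0+D_p$, we get $|c-x|^2\asymp L^{-1}|f x-c_0|$, and $|fx-c_0|$ is between $D_p(c_0)$ and $D_{p-1}(c_0)$. So I need good two-sided control on $D_p(c_0)$. From \eqref{Theta}, $D_p(c_0)=L^{-1/2}\left[\sum_{i=0}^{p-1}d_i^{-1}(c_0)\right]^{-1}$ with $d_i(c_0)=d_C(c_i)d_S(c_i)/J^i(c_0)$. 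Using (G2)$_{n,c}$ to bound $J^i(c_0)\ge L^{\lambda i}$ from below, (G3)$_{n,c}$ to bound $d_S(c_i)\ge L^{-4\alpha i}$ from below, and $d_C(c_i)\le 1$ (trivially) as well as $d_C(c_i)\ge\delta$ from the definition of $\Delta_N$ for $i\le N$ combined with (G1) for larger $i$ — this lets me say the sum $\sum d_i^{-1}$ is dominated by a geometric-type series that is controlled by a polynomial in $L$ of small exponent; hence $D_p(c_0)\ge L^{-K\alpha p}$ roughly, and also $D_{p-1}(c_0)\le$ something not too small. Crucially $D_p$ decays slower than any fixed exponential $L^{-\theta p}$ with $\theta$ bounded below by $\alpha$ times a constant, because the exponential growth $L^{\lambda i}$ in the denominator of $d_i$ makes $d_i^{-1}$ shrink geometrically, so the sum converges and $D_p\gtrsim L^{-1/2}J^{p-1}(c_0)/(d_C(c_{p-1})d_S(c_{p-1}))$ up to a bounded factor — this is the heart of getting a \emph{lower} bound on $D_p$ in terms of $J^{p-1}(c_0)$.

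\textbf{Step 2: prove (a).} Since $x\in I_p(c)$ implies $|fx-c_0|< D_{p-1}(c_0)$ and $|c-x|^2\asymp L^{-1}|fx-c_0|$, and since (by Step 1 / (G2)) $D_{p-1}(c_0)\le L^{-\lambda(p-1)+K}$ or so (the sum $\sum_{i<p-1} d_i^{-1}$ is at least its first term $d_0^{-1}=J^0/(d_C(c_0)d_S(c_0))\ge 1/(d_C(c_0)d_S(c_0))$ which is $\ge 1$, hence $D_{p-1}\le L^{-1/2}$; and more precisely one uses that the sum is $\ge d_{p-2}^{-1}\asymp J^{p-2}(c_0)^{-1}d_C(c_{p-2})d_S(c_{p-2})$, giving $D_{p-1}\lesssim J^{p-2}(c_0)^{-1}\le L^{-\lambda(p-2)}$ after absorbing the bounded factor $d_C d_S\le1$). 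Therefore $|c-x|^2\lesssim L^{-1}L^{-\lambda(p-2)}$, so $|c-x|\le L^{-\lambda p/2 + K}$, which rearranges to $p\le \frac{2}{\lambda}\log|c-x|^{-1}+K$, and absorbing the additive constant by enlarging the exponent slightly gives $p\le \log|c-x|^{-2/\lambda}$ as claimed (for all large $L$, since $\log$ is base $L$).

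\textbf{Step 3: prove (b).} Write $J^p(x)=|f'x|\cdot J^{p-1}(fx)\ge \tfrac12 |f'x|\cdot J^{p-1}(c_0)$ by Lemma \ref{dist}. For $|f'x|$: by Lemma \ref{derivative}(a), $|f'x|\asymp L\, d_C(x)/d_S(x)$; with $d_C(x)=|c-x|$ and $d_S(x)$ bounded below (we may assume $x\in C_\delta$ so $x$ is far from $S$ since $\delta\ll\sigma$ and $\Delta_N$ keeps $C$ away from $S_\sigma$ — more carefully, $d_S(x)\ge d_S(c)-|c-x|\ge \sigma-\delta\gtrsim\sigma=L^{-1/6}$), so $|f'x|\gtrsim L\cdot|c-x|\cdot L^{1/6}=L^{7/6}|c-x|$. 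For $J^{p-1}(c_0)$: from Step 1's lower bound on $D_{p-1}(c_0)$, or directly, $J^{p-1}(c_0)\gtrsim \sqrt{L}\,D_{p-1}(c_0)\cdot\frac{1}{d_C(c_{p-1})d_S(c_{p-1})}\cdot(\text{something})$; the clean route is: $|fx-c_0|\ge D_{p-1}(c_0)$ and $|fx-c_0|\asymp L|c-x|^2$, combined with $D_{p-1}(c_0)\ge L^{-1/2}J^{p-2}(c_0)^{-1}\cdot$(no wait, that's an upper bound direction) — rather I use $D_{p-1}(c_0)\ge L^{-1/2}\cdot J^{p-1}(c_0)\cdot d_C(c_{p-1})^{-1}d_S(c_{p-1})^{-1}\cdot c_1$ is false; the correct monotone bound is $\left[\sum_{i=0}^{p-2}d_i^{-1}\right]^{-1}\ge \frac{1}{(p-1)\max_i d_i^{-1}}$ and $\max_i d_i^{-1}$ is controlled by the last term using (G2)+(G3) so $D_{p-1}\gtrsim p^{-1}L^{-1/2}d_{p-2}\gtrsim p^{-1}L^{-1/2}J^{p-2}(c_0)^{-1}d_C(c_{p-2})d_S(c_{p-2})$ — that's still an upper-type bound; the honest device (as in Benedicks--Carleson) is that the \emph{sum} is comparable to its \emph{largest} term, which is $d_{p-1}^{-1}$ (because $J^i(c_0)$ grows exponentially faster than $d_C d_S$ can shrink, using (G1) to bound $d_C(c_i)$ below and (G3) for $d_S$), whence $D_{p-1}(c_0)\asymp L^{-1/2}d_{p-1}(c_0)= L^{-1/2}J^{p-1}(c_0)^{-1}d_C(c_{p-1})d_S(c_{p-1})$; therefore $L|c-x|^2\gtrsim |fx-c_0|\ge D_{p-1}(c_0)\asymp L^{-1/2}J^{p-1}(c_0)^{-1}d_C(c_{p-1})d_S(c_{p-1})$, i.e. $J^{p-1}(c_0)\gtrsim L^{-3/2}|c-x|^{-2}d_C(c_{p-1})d_S(c_{p-1})$. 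Hmm, this has unwanted small factors $d_C(c_{p-1})d_S(c_{p-1})$; but these are bounded below by (G1)$_{n,c}$ applied with $i=p-1$, $j=p-1$... actually (G1) controls $J$, not $d_C$; the bound $d_C(c_{p-1})\ge$ small comes from: either $p-1\le$ the largest time the orbit of $c$ has entered $C_\delta$, handled inductively, or we simply carry $d_C(c_{p-1})d_S(c_{p-1})\ge L^{-K\alpha p}$ as a consequence of (G3) and an analogous lower bound on $d_C$ built into the inductive setup. Plugging in: $J^p(x)\ge \tfrac12|f'x|J^{p-1}(c_0)\gtrsim L^{7/6}|c-x|\cdot L^{-3/2}|c-x|^{-2}L^{-K\alpha p}=L^{-1/3-K\alpha p}|c-x|^{-1}$. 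Since $\alpha=10^{-6}$ is tiny and by (a) $p\le\frac2\lambda\log|c-x|^{-1}$, the factor $L^{-K\alpha p}\ge |c-x|^{2K\alpha/\lambda}$, so $J^p(x)\gtrsim L^{-1/3}|c-x|^{-1+2K\alpha/\lambda}\ge |c-x|^{-1+16\alpha/\lambda}$ for all large $L$ (the $L^{-1/3}$ is absorbed since $|c-x|\le\delta=L^{-\alpha N}$ is small and we have spare room in the exponent, using $N$ large). Finally, $|c-x|^{-1+16\alpha/\lambda}\ge \delta^{-1+16\alpha/\lambda}$; and combining with $p\le\frac2\lambda\log|c-x|^{-1}$ gives $|c-x|^{-1+16\alpha/\lambda}=L^{(1-16\alpha/\lambda)\log|c-x|^{-1}}\ge L^{(1-16\alpha/\lambda)\frac{\lambda}{2}p}\ge L^{\frac{\lambda}{3}p}$ since $(1-16\alpha/\lambda)/2\ge 1/3$ when $\alpha/\lambda\le 1/48$, which holds as $\alpha=10^{-6},\lambda=10^{-3}$.

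\textbf{Main obstacle.} The delicate point is Step 1 — establishing that the sum $\sum_{i=0}^{p-1}d_i^{-1}(c_0)$ is comparable (up to a factor polynomial or even bounded in relevant scales) to its largest term, equivalently that $D_p(c_0)$ admits a clean two-sided bound in terms of $J^{p-1}(c_0)$ and $d_C(c_{p-1})d_S(c_{p-1})$. This requires that $d_i(c_0)=d_C(c_i)d_S(c_i)/J^i(c_0)$ decreases at a genuinely geometric rate, which in turn forces one to combine (G2) (exponential growth $J^i\ge L^{\lambda i}$) against the at-most-subexponential decay of $d_C(c_i)d_S(c_i)$ — the latter needing (G3) for $d_S$ and an analogous lower bound $d_C(c_i)\ge L^{-K\alpha i}$ that must be extracted from (G1) together with the standing assumption $a\in\Delta_N$ (for $i\le N$) and the bound-period bookkeeping (for $i>N$). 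Keeping the exponents honest so that the final exponent $-1+16\alpha/\lambda$ comes out exactly, and ensuring every "$\lesssim$" hides only constants independent of $N$ and $L$, is where the real care lies.
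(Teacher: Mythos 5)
Your part~(a) follows the same route as the paper, but for part~(b) the argument has a real gap and a small sign error, and the ``main obstacle'' you name (proving $\sum_i d_i^{-1}$ is comparable to its largest term) is not in fact what is needed.

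The paper never establishes a two-sided bound on $D_p(c_0)$ or comparability of the sum to its largest term. Instead it proves the one-sided sublemma
\[
J^{i}(c_0)\,D_{i+1}(c_0)\geq L^{-1-7\alpha i},
\]
by bounding \emph{each} term $J^{i}(c_0)\,d_j(c_0)=\frac{J^{i}(c_0)}{J^{j}(c_0)}\,d_C(c_j)\,d_S(c_j)$ from below using (G1) for the derivative quotient, (G1)$+$Lemma~\ref{derivative}(a) to get $d_C(c_j)\gtrsim\sigma L^{-\alpha j}$, and (G3) (together with $a\in\Delta_N$ for small $j$) for $d_S(c_j)$, then summing. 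This gives $\sum_j J^{i}(c_0)^{-1}d_j^{-1}\leq\sigma^{-3}L^{7\alpha i}$ and the claim follows by taking reciprocals; no comparability to the largest term is required, so the issue you flag as the heart of Step~1 simply dissolves. The second crucial move you do not make explicit is that $x\in I_p(c)$ gives the \emph{lower} bound $|fx-c_0|\geq D_p(c_0)$, hence $L|c-x|^2\gtrsim D_p(c_0)$, which lets one write
\[
J^p(x)\geq K^{-1}J^{p-1}(c_0)\cdot L|c-x|\geq K^{-1}J^{p-1}(c_0)\cdot|c-x|^{-1}D_p(c_0),
\]
and then apply the sublemma with $i=p-1$. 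You instead try to extract a lower bound directly on $J^{p-1}(c_0)$ from the upper bound $|fx-c_0|\leq D_{p-1}(c_0)$, which leads to the unwanted factors $d_C(c_{p-1})d_S(c_{p-1})$ that you then struggle to control.

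Separately, the claimed lower bound $|f'x|\gtrsim L^{7/6}|c-x|$ is wrong. Lemma~\ref{derivative}(a) gives $|f'x|\geq K_0^{-1}L\,d_C(x)/d_S(x)$; a lower bound on $|f'x|$ needs an \emph{upper} bound on $d_S(x)$, and the estimate $d_S(x)\gtrsim\sigma$ you derive goes the wrong way. The correct statement, used implicitly in the paper via Lemma~\ref{derivative}(c), is $|f'x|\gtrsim L|c-x|$. Fortunately this error is harmless here because the paper's route does not try to squeeze an extra power of $L$ out of $|f'x|$.

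Finally, a remark on the endgame: once you have $J^p(x)\geq K^{-1}L^{-1-7\alpha p}|c-x|^{-1}$, substituting $p\leq\log|c-x|^{-2/\lambda}$ gives $L^{-7\alpha p}\geq|c-x|^{14\alpha/\lambda}$ and hence $J^p(x)\geq K^{-1}L^{-1}|c-x|^{-1+14\alpha/\lambda}$, and the remaining $K^{-1}L^{-1}$ is absorbed into an extra $|c-x|^{2\alpha/\lambda}$ by taking $N$ (hence $\delta^{-1}$) large; this is the exact mechanism you gesture at, but the bookkeeping should be pinned to the sublemma's exponent $7\alpha$, not to an undetermined constant $K$ multiplying $\alpha$.
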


\begin{proof} By definition we have
$$
|c-x|^2\leq D_{p-1}(c_0)\leq L^{-\frac{1}{2}}d_{p-2}(c_0) <
L^{-\frac{1}{2}} J^{p-2}(c_0)^{-1}.$$
Then by (G2),
\begin{equation}\label{low}
|c-x|^2\leq L^{-\frac{1}{2}-\lambda(p-2)}\leq L^{-\lambda p}
,
\end{equation}
from which (a) follows. The second inequality of (b) follows from 
(\ref{low}).

\begin{sublemma}\label{add-lem1-s2.3B}
For $0\leq i \leq n$ we have:
\begin{itemize}
\item[(a)] $d_C(c_i)\geq
K^{-1} \sigma L^{-\alpha i}$; 

\item[(b)] $J^i(c_0)D_{i+1}(c_0)\geq
L^{-1-7\alpha i}$.
\end{itemize}
\end{sublemma}
We finish the proof of Lemma \ref{reclem1} assuming the conclusion of this sublemma.
We have
\begin{align*} J^{p}(x)=
J^{p-1}(fx)J(x)\geq K^{-1}J^{p-1}(c_0)
\cdot L|c-x|\geq K^{-1}J^{p-1}(c_0) \cdot |c-x|^{-1}
D_{p}(c_0),
\end{align*}
where for the first inequality we use Lemma \ref{dist} and Lemma
\ref{derivative}(c), and for the last inequality we use $x\in I_{-p}(c)\cup I_p(c)$. Using Sublemma \ref{add-lem1-s2.3B}(b) for $i = p-1$ we obtain
$$J^p(x)\geq
K^{-1} L^{-1-7\alpha p} |c-x|^{-1}.
$$
Substituting
into this the upper estimate of $p$ in Lemma \ref{reclem1}(a) we obtain
$$
J^p(x)\geq K^{-1}
L^{-1}|c-x|^{-1+\frac{15\alpha}{\lambda}} \geq
|c-x|^{-1+\frac{16\alpha}{\lambda}}.
$$
We have used $|c-x|\leq\delta=L^{-\alpha N_0}$ for the last
inequality. 
\medskip

It is left to prove the sublemma.
(G1) implies $|f'c_i|\geq
L\min\{\sigma,L^{-\alpha i}\}$. Then (a) follows from
Lemma \ref{derivative}(a). As for (b), let $j\in[0,i]$. By definition,
$$
J^i(c_0)d_j(c_0)=
\frac{J^{i}(c_0)}{J^j(c_0)}d_C(c_j)d_S(c_j).
$$
We have: $\frac{J^i(c_0)}{J^j(c_0)}\geq
L^{-\alpha j}\sigma$ from (G1); $d_C(v_j)\geq K^{-1} \sigma
L^{-\alpha j}$ from (a); $d_S(v_j)\geq \sigma
L^{-4\alpha j}$ from (G3). Hence, $J^i(c_0)d_j\geq K^{-1}
\sigma^3L^{-6\alpha j}$, and thus
\begin{align*}
\sum_{j=0}^{i}J^i(c_0)^{-1}d_j^{-1}(c_0) &\leq \sigma^{-3}
L^{7\alpha i}.
\end{align*}
Taking reciprocals implies (b). 
\end{proof}


\subsection{Global dynamical properties}\label{s2.4}
At step $n$ of induction,
we wish to exclude all parameters for which one of (G1-3$)_n$ is violated
for some $c\in C$, and to estimate the measure of the parameters deleted. 
Conditions (G1) (G2), however, can not be used directly as rules for exclusion,
since they do not care about cummulative effects of ``shallow returns".
Hence we introduce a stronger condition, based on 
the notion of \emph{deep returns},  and will use it as a rule for deletion
in Section \ref{s3}.
\medskip

\noindent{\it Hypothesis in Sect.\ref{s2.4}:}
$f =
f_{a}$ is such that $a \in \Delta_{N_0}$. $n\geq N_0$ and
(G1)$_{n-1}$-(G3)$_{n-1}$ hold for all $c\in C$. 
\medskip


For all $c\in C$ we have:

\begin{itemize}

\item[(i)] $f^{i+1}c\notin C\cup S$ for all $0\leq i\leq n$; 

\item[(ii)] for the orbit of $c_0=fc$, the bound period initiated at all
returns to $C_{\delta}$ before $n$ is $\ll n$.

\end{itemize}

\noindent({\it Bound/free structure})
We divide the orbit of
$c_0$ into alternative bound/free segments as follows. Let
$n_1$ be the smallest $j\geq0$ such that $c_j\in C_\delta$.
For $k>1$, we define free return times $n_{k}$ inductively as
follows. Let $p_{k-1}$ be the bound period of $c_{n_{k-1}}$,
and let $n_{k}$ be the smallest $j\geq n_{k-1}+p_{k-1}$ such that
$c_j\in C_\delta$. We decompose the orbit of $c_0$ into
bound segments corresponding to time intervals $(n_k,n_k+p_k)$ and
free segments corresponding to time intervals $[n_k+p_k,n_{k+1}]$.
The times $n_k$ are the {\it free return} times. We have
$$
n_1<n_1+p_1 \leq n_2<n_2+p_2 \leq \cdots.
$$

\begin{definition} 
{\rm Let $c\in C$ and assume that $c_0=fc$
makes a free return to
$C_\delta$ at time $\nu\leq n$.
We say $\nu$ is a {\it deep return} of $c_0$ if for every free return
$i\in [0,\nu-1]$;
\begin{equation}\label{inessential}
\sum_{\stackrel{j\in[i+1,\nu]}{free \ return}}2\log
d_C(c_{j})\leq \log d_C(c_{i}).
\end{equation}}
\end{definition}

We say (R$)_{n,c}$ holds if 
\[\prod_{i\in[0, j]: \ \text{deep 
return}} d_C(c_i)  \geq L^{-\frac{1}{20} \lambda \alpha j}\ \ \text{for every } N_0\leq j\leq n.\]
We say (R$)_n$ holds if (R$)_{n,c}$ holds for every $c\in C$.

\begin{lemma}\label{derive0} If (R$)_{n,c}$ 
holds, then (G1$)_{n,c}$, (G2$)_{n,c}$ hold.
\end{lemma}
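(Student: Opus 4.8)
\textbf{Proof proposal for Lemma \ref{derive0}.}

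The plan is to derive (G1$)_{n,c}$ and (G2$)_{n,c}$ from (R$)_{n,c}$ by running the bound/free decomposition of the orbit of $c_0$ and controlling the derivative along it piece by piece. The key point is that the only place where expansion can be lost is a return to $C_\delta$, and Lemma \ref{outside} already handles the free segments: on any free segment the derivative grows at a definite exponential rate ($L^{2\lambda}$ per step, with an extra $\delta$ or not depending on whether the segment ends inside $C_\delta$), while Lemma \ref{reclem1}(b) says that each bound period (started at a return to depth $|c-c_{n_k}|<\delta$) recovers the loss: $J^{p_k}(c_{n_k})\geq|c-c_{n_k}|^{-1+16\alpha/\lambda}\geq L^{\lambda p_k/3}$. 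So along each return-bound-free cycle the net derivative picks up a geometric factor that more than compensates the ``cost'' $d_C(c_{n_k})$ of entering $C_\delta$, except that at a \emph{shallow} return the recovery gain $|c-c_{n_k}|^{-1+16\alpha/\lambda}$ may be only barely bigger than $1/d_C(c_{n_k})=|c-c_{n_k}|^{-1}$; cumulatively these shallow returns could still eat into expansion, and this is exactly what the deep-return condition (R) is designed to control.

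The mechanism is the standard one: at a shallow return $i$ (one which is not a deep return) the definition of ``deep return'' fails, i.e. there is an earlier free return $i'<i$ with $\sum_{j\in[i'+1,i],\ \mathrm{free}}2\log d_C(c_j)>\log d_C(c_{i'})$. One charges the depth of the shallow return $i$ to the most recent deep return preceding it; summing the ``inessential return'' inequality (\ref{inessential}) telescopically shows that the product of $d_C(c_i)$ over \emph{all} free returns up to time $j$ is bounded below by (a constant times) the product over \emph{deep} returns only, which by (R$)_{n,c}$ is $\geq L^{-\lambda\alpha j/20}$. Thus $\prod_{i\in[0,j],\ \mathrm{free}} d_C(c_i)\geq L^{-K\lambda\alpha j}$ for a universal constant $K$ (here one also uses Sublemma \ref{add-lem1-s2.3B}(a), $d_C(c_i)\geq K^{-1}\sigma L^{-\alpha i}$, to bound the boundedly-many returns that get absorbed into constants, and uses $\delta\ll\sigma$).

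Now one multiplies. Fix $0\le i<j\le n+1$ and split $[i,j]$ into the bound and free segments it meets. On free segments apply Lemma \ref{outside}: each contributes $\geq L^{2\lambda(\text{length})}$ (the $\delta$ loss only occurs at the free segment that immediately precedes a return, and is absorbed into the $d_C(c_{n_k})$ bookkeeping, since $\delta=L^{-\alpha N}$). At the first step of each bound period apply Lemma \ref{derivative}(c) to get $J(c_{n_k})\geq K^{-1}L|c-c_{n_k}|\geq K^{-1}L\,d_C(c_{n_k})$ up to constants, and on the rest of the bound period apply Lemma \ref{reclem1}(b) to get the recovery factor $\geq L^{\lambda p_k/3}$. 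Collecting exponents, $J^{j-i}(c_i)\geq L^{2\lambda m}\cdot L^{(\lambda/3)\sum p_k}\cdot\prod_{n_k\in[i,j]}(K^{-1}L\,d_C(c_{n_k}))$ where $m$ is the total free length. Using the product estimate above, $\prod d_C(c_{n_k})\geq L^{-K\lambda\alpha j}$, and since $\alpha\ll\lambda$ the loss $L^{-K\lambda\alpha j}$ is dwarfed by the gain $L^{2\lambda m+(\lambda/3)\sum p_k}$ once one checks $m+\sum p_k$ is a definite fraction of $j-i$ — which follows from Lemma \ref{reclem1}(a), $p_k\leq\log|c-c_{n_k}|^{-2/\lambda}$, bounding each bound period by a constant times the gain it produces. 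For (G2$)_{n,c}$ take $i=0$: then one also uses Lemma \ref{outside}(b) or the initial estimate from $a\in\Delta_{N_0}$ to get $J^N(c_0)\ge L^{2\lambda N}$ at the start, after which the same accounting gives $J^i(c_0)\geq L^{\lambda i}$ for all $i\le n+1$, which is even better than the $\lambda i$ exponent claimed. For (G1$)_{n,c}$ one must additionally retain the factor $L\min\{\sigma,L^{-\alpha i}\}$; this comes precisely from the first step $J(c_i)\geq K^{-1}L\,d_C(c_i)$ when $c_i\in C_\delta$ together with Sublemma \ref{add-lem1-s2.3B}(a), and from Lemma \ref{outside}(a)'s factor $\delta$ when $c_i\notin C_\delta$ (note $\delta=L^{-\alpha N}\le L^{-\alpha i}$ is false in general, so here one instead keeps the weaker $L\min\{\sigma,L^{-\alpha i}\}$ by noting $d_C(c_i)\ge K^{-1}\sigma L^{-\alpha i}$ always).

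The main obstacle is the bookkeeping that matches the \emph{cumulative} depth loss of shallow returns against the deep-return budget in (R): one must verify that charging each shallow return's depth to the preceding deep return is consistent (no double-charging) and that the telescoping of (\ref{inessential}) really yields $\prod_{\mathrm{free}} d_C(c_i)\gtrsim\prod_{\mathrm{deep}} d_C(c_i)$ with only a multiplicative constant, not a constant growing with $j$. Once that combinatorial lemma is in hand, the rest is the routine exponent accounting sketched above, using $\alpha=10^{-6}\ll\lambda=10^{-3}$ to absorb all polynomial-in-$L$ losses into the exponential gains.
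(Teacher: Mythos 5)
Your proposal follows essentially the same two-stage strategy as the paper: first use the deep/shallow dichotomy and a telescoping of (\ref{inessential}) to convert (R$)_{n,c}$ into a lower bound on $\prod_{\text{free}} d_C(c_i)$, then do bound/free exponent accounting with Lemmas \ref{outside} and \ref{reclem1}(b). Two remarks.

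First, a small imprecision in the combinatorial step: the telescoping does not give $\prod_{\text{free}} d_C(c_i)\gtrsim \prod_{\text{deep}} d_C(c_i)$ up to a constant, but rather $\sum_{\text{shallow}}\log d_C(c_j)\geq\sum_{\text{deep}}\log d_C(c_i)$, hence $\prod_{\text{free}} d_C(c_i)\geq \bigl(\prod_{\text{deep}} d_C(c_i)\bigr)^2$. This is why the factor $2$ appears in the definition of deep return and why (R) carries the coefficient $\tfrac{1}{20}\lambda\alpha$ rather than $\tfrac{1}{10}\lambda\alpha$. Numerically you reach the same place since both are $L^{-\text{const}\cdot\alpha n}$, but the square matters for consistency of the constants.

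Second, and more substantively, the accounting you describe — free segments via Lemma \ref{outside}, bound segments via the recovery of Lemma \ref{reclem1}(b) — only works when the terminal time $j$ (and for (G2), every $i\leq n+1$) falls at a free time, so that the last bound period is complete. You do not address the case where $j$ lands inside a bound period, which is exactly the paper's ``Case II.'' There one cannot invoke Lemma \ref{reclem1}(b) for the truncated bound segment; instead one writes $J^{j-i}(c_i)$ as a product of three factors — up to the last free return $\hat j$, the single step at $\hat j$, and from $\hat j+1$ to $j$ — and estimates the third factor by bounded distortion (Lemma \ref{dist}) together with (G2$)_{n-1}$ for the \emph{binding} critical orbit, which is available from the standing hypothesis of Sect.\ref{s2.4}. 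Without this case the claimed bound $J^{j-i}(c_i)\geq L\min\{\sigma,L^{-\alpha i}\}$ is not established for all $j$: a truncated bound segment can in principle lose the full factor $L\,d_C(c_{\hat j})$ at the return and give back almost nothing, and it is precisely (G2$)_{n-1}$ applied to the binding orbit (not Lemma \ref{reclem1}(b) applied to $c_{\hat j}$) that rules this out. The rest of your sketch — using Sublemma \ref{add-lem1-s2.3B}(a) to control $d_C(c_i)$, and the observation that the total bound time is $O(\alpha n/\lambda)\ll n$ so free expansion dominates — matches the paper.
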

\begin{proof}
 The first step is to show
\begin{equation}\label{prop-s3.1}
\prod_{\stackrel{N_0 < i \leq n}{\text{\rm free return }}} \ d_C(c_i)\geq L^{- \alpha n}.\end{equation}
We call a free return {\it shallow} if it is not a deep free
return. Let $\mu\in(0,n)$ be a shallow free return time, and
$i(\mu)$ be the largest deep free return time $< \mu$. We claim
that
\begin{equation}\label{inessential}
\sum_{\stackrel{i(\mu) +1\leq j\leq\mu}{\text{free return}}} \log
d_C(c_j) \geq \log d_C(c_{i(\mu)}).
\end{equation}
We finish the proof of (\ref{prop-s3.1}) assuming (\ref{inessential}). Let $\mu_1$
be the largest free shallow return time in $(0, n]$, and $i_1$ be
the largest deep free return time $< \mu_1$. We then let $\mu_2$
be the largest shallow free return time $< i_1$, and $i_2$ be the
largest deep free return time $<\mu_2$, and so on. We obtain a
sequence of deep free return times $i_1 > i_2 \cdots
> i_q$, and we have
\begin{equation}\label{f4-s3.1}
\sum_{\stackrel{0\leq j\leq n}{\text{shallow return}}} \log
d_C(c_{j}) \geq \sum_{j=1}^q\log d_C (c_{i_j})\geq
\sum_{\stackrel{0\leq i\leq k}{\text{deep return}}}\log
d_C(c_{i})
\end{equation}
where the first inequality is from (\ref{inessential}). We then
have
\begin{equation*}
\sum_{0\leq j\leq n\colon \text{free return}} \log d_C(c_{j}) \geq
2\sum_{\stackrel{0\leq j\leq n}{\text{free return}}} \log
d_C(c_{j})
  \geq \alpha n,
\end{equation*}
where the last inequality is from (R)$_{n,c}$.

\smallskip

To prove (\ref{inessential}), we let $\beta_1$ be the smallest
free return time  $\leq \mu-1$ such that
\begin{equation}\label{deep-add2}
\sum_{\stackrel{\beta_1 +1\leq j\leq\mu}{\text{free return}}}2\log
d_C(c_j) > \log d_C(c_{\beta_1}).
\end{equation}
We claim that no deep free return occurs during the
period $[\beta_1+1,\mu]$. This is because if $i' \in [\beta_1 + 1,
\mu]$ is a deep return, then we must have
$$
\sum_{\stackrel{\beta_1 +1\leq j\leq\mu}{\text{free return}}}2\log
d_C(c_j) \leq \sum_{\stackrel{\beta_1 +1\leq j\leq
i'}{\text{free return}}}2\log d_C(c_j) \leq \log
d_C(c_{\beta_1}),
$$
contradicting (\ref{deep-add2}). If $\beta_1$ is a deep return, we
are done. Otherwise we  find a $\beta_2 < \beta_1$ so that
\begin{equation}\label{deep-add1}
\sum_{\stackrel{\beta_2+1 \leq j\leq \beta_1}{\text{free
return}}}2\log d_C(c_j) > \log d_C(c_{\beta_2}),
\end{equation}
and so on. This process will end at a deep free return time, which
we denote as $\beta_q : = i(\mu)$. (\ref{inessential}) follows
from adding (\ref{deep-add2}), (\ref{deep-add1}) and so on up to
the time for $\beta_q = i(\mu)$. 
\medskip

We first prove (G1)$_{n,c}$. Let $0\leq i < j \leq
n+1$.
Observe that 
the bound periods for all returns to $C_{\delta}$ for
the orbit of $c_0$ up to time $n$ is $\leq \frac{2
\alpha}{\lambda}n \ll n$. This follows from (\ref{prop-s3.1}) and Lemma
\ref{reclem1}(a).
Hence, it is possible to
introduce the bound-free structure starting from $c_i$ to
$c_j$. We consider the following two cases separately.

\smallskip

\noindent {\it Case I: $j$ is free.} \ For free segments we use
Lemma \ref{outside}, and for bound segments we use Lemma
\ref{reclem1}(c). We obtain exponential growth of derivatives from
time $i$ to $j$, which is much better than what is asserted by
(G1$)_{n,c}$.

\smallskip

\noindent{\it Case II: $j$ is bound.} Let $\hat j$ denote the free
return with a bound period $p$ such that $j\in[\hat j+1,\hat
j+p]$. We have $\hat j \leq n$, for otherwise $j>n+1$.
Consequently,
$$
J^{j-i}(c_i)= \frac{J^{\hat
j}(c_0)}{J^i(c_0)} \cdot \frac{J^{\hat j+1}(c_0)}{J^{\hat
j}(c_0)} \cdot \frac{J^j(c_0)}{J^{\hat j +1}(c_0)} >
L^{\frac{1}{3} \lambda(\hat j-i)} \cdot K^{-1} Ld_C(v_{\hat j})
\cdot K^{-1} L^{\lambda(j-\hat j-1)}
$$
where for the last inequality, we use Lemma \ref{reclem1}(c)
combined with Lemma \ref{outside} for the first factor. For the
third factor we use bound distortion and (G2)$_{n-1}$ for the binding critical orbit. It then follows that
$
J^{j-i}(c_0) \geq L^{\alpha(j-i)- \alpha \hat
j}\geq L^{-\alpha i}.
$
Hence (G1$)_{n,c}$ holds.

\smallskip

As for (G2$)_{n,c}$, we introduce the bound-free structure starting from 
$c_0$ to $c_{n+1}$. Observe that
the sum of the lengths of all bound periods for the orbit of $c_0$
up to time $n$ is $\leq \frac{2
\alpha}{\lambda}n \ll n$. This follows from (\ref{prop-s3.1}) and Lemma
\ref{reclem1}(a).
Using Lemma \ref{reclem1}(b) for each bound segment and
Lemma \ref{outside} for each free segment in between two consecutive
free returns, we have
$$
J^{n+1}(c_0) \geq \delta L^{2 \lambda n(1-\frac{\alpha}{\lambda})}\geq  L^{ \lambda n}.
$$
 This completes the proof of Lemma \ref{derive0}. \end{proof}

The next expansion estimate at deep return times will be used in a crucial way in the construction of the parameter set $\Delta$.
\begin{lemma}\label{exp}
If $c \in C$ and $\nu\leq n+1$ is a deep return time of  $c_0=fc$, then
$$
J^{\nu}(c_0) \cdot D_{\nu}(c_0) \geq \sqrt{d_C(c_{\nu})}.
$$
\end{lemma}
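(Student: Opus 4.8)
The plan is to unwind the definition of $D_\nu(c_0)$ and reduce the claim to a lower bound on the quantity $J^\nu(c_0)\,d_j(c_0)$ for each $0\le j<\nu$, exactly as in the proof of Sublemma \ref{add-lem1-s2.3B}(b), but now using the extra information carried by the hypothesis that $\nu$ is a \emph{deep} return. Recall
$$
J^\nu(c_0)D_\nu(c_0)=\frac{1}{\sqrt L}\left[\sum_{j=0}^{\nu-1}J^\nu(c_0)^{-1}d_j^{-1}(c_0)\right]^{-1},\qquad
J^\nu(c_0)d_j(c_0)=\frac{J^\nu(c_0)}{J^j(c_0)}\,d_C(c_j)\,d_S(c_j).
$$
So it suffices to show $\sum_{j=0}^{\nu-1}J^\nu(c_0)^{-1}d_j^{-1}(c_0)\le \frac{1}{\sqrt L}\,d_C(c_\nu)^{-1/2}$, i.e. each term is at most, say, $\nu^{-1}L^{-1/2}d_C(c_\nu)^{-1/2}$, which by Lemma \ref{reclem1}(a) (so $\nu$ is at most a fixed negative power of $\log d_C(c_\nu)$, hence polynomially bounded) will follow from
$$
\frac{J^\nu(c_0)}{J^j(c_0)}\,d_C(c_j)\,d_S(c_j)\ \ge\ K^{-1}\sqrt{d_C(c_\nu)}\qquad(0\le j<\nu).
$$

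First I would estimate the three factors as in Sublemma \ref{add-lem1-s2.3B}: from (G1)$_{n,c}$ one gets $J^\nu(c_0)/J^j(c_0)\ge L\min\{\sigma,L^{-\alpha j}\}\cdots$ — more precisely the bound-free decomposition between times $j$ and $\nu$ gives a genuinely exponential factor $L^{\lambda'(\nu-j)}$ coming from Lemma \ref{outside} and Lemma \ref{reclem1}(b), up to a loss $\prod d_C(c_i)$ over the free returns $i$ in $(j,\nu)$; from Sublemma \ref{add-lem1-s2.3B}(a), $d_C(c_j)\ge K^{-1}\sigma L^{-\alpha j}$; and from (G3)$_{n,c}$, $d_S(c_j)\ge L^{-4\alpha j}$ (for $j\ge N$; for $j<N$ use $a\in\Delta_N$ so $d_S(c_j)\ge\sigma$). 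The one dangerous factor is $d_C(c_\nu)$ on the right-hand side, which can be tiny precisely because $\nu$ is a return into $C_\delta$; everything hinges on controlling it by the product of $d_C(c_i)$ over the \emph{earlier} deep returns, and on the binding/distortion that relates $J^\nu(c_0)/J^j(c_0)$ across the last free return before $\nu$ to $L\,d_C(c_{\hat\nu})$ as in Case II of Lemma \ref{derive0}.

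The key point — and the main obstacle — is the deep-return inequality (\ref{inessential}): by definition of a deep return, for every free return $i\in[0,\nu-1]$ one has $\sum_{j\in[i+1,\nu],\ \text{free}} 2\log d_C(c_j)\le \log d_C(c_i)$, which in particular (taking $i$ to be the return just counted, or summing) forces $\log d_C(c_\nu)\ge \tfrac12\log d_C(c_i)$ for each preceding free return $i$, hence $d_C(c_\nu)\ge d_C(c_j)^{1/2}$ for the relevant $j$'s, and more importantly bounds the accumulated loss $\prod_{i\in(j,\nu)\ \text{free}} d_C(c_i)$ from below by a fixed power of $d_C(c_\nu)$ (not merely by $d_C(c_\nu)$ to some large power). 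Combining this with the exponential gain $L^{\lambda'(\nu-j)}$ from Lemma \ref{outside}/\ref{reclem1}(b) — which dominates the polynomial-in-$L$ losses $L^{\alpha j}$ from (G3) and Sublemma \ref{add-lem1-s2.3B}(a) since $\sum$(bound periods)$\ll\nu$ by (\ref{prop-s3.1}) and Lemma \ref{reclem1}(a) — yields $\frac{J^\nu(c_0)}{J^j(c_0)}d_C(c_j)d_S(c_j)\ge K^{-1}d_C(c_\nu)^{1/2}$, and summing over $j$ and taking reciprocals gives the claim. I expect the bookkeeping that trades the chain of inequalities (\ref{deep-add2})–(\ref{deep-add1}) for the single bound $\prod_{\text{earlier deep returns}} d_C(c_i)\ge L^{-\lambda\alpha\nu/20}$ (condition (R)$_{n,c}$) to be the delicate step, because it is exactly where the $\tfrac{1}{20}$ and the factor $\tfrac12$ in the definition of deep return must be reconciled with the $\sqrt{\ \cdot\ }$ on the right-hand side.
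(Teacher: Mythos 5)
Your reduction ``each term $J^\nu(c_0)^{-1}d_j^{-1}(c_0)\le \nu^{-1}L^{-1/2}d_C(c_\nu)^{-1/2}$, which will follow from a uniform bound $J^\nu(c_0)d_j(c_0)\ge K^{-1}\sqrt{d_C(c_\nu)}$'' cannot work, because a uniform lower bound with a fixed constant $K$ gives only $\sum_j J^\nu(c_0)^{-1}d_j^{-1}(c_0)\le K\nu\, d_C(c_\nu)^{-1/2}$, and the parenthetical justification for absorbing the factor $\nu$ is false: Lemma \ref{reclem1}(a) bounds a \emph{bound period} $p$ by $\log|c-x|^{-2/\lambda}$, not the \emph{return time} $\nu$, which is not controlled by $d_C(c_\nu)$ at all (one can wait an arbitrarily long free stretch before the return). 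The paper instead organizes the sum into contributions from bound segments and free segments and shows each is a geometrically decaying series, so that only the last few terms matter: for free times one has $J^{\nu-i}(c_i)\gtrsim L^{\lambda(\nu-i)/3}$, and for bound segments the quantities $J^\nu(c_0)^{-1}\Theta_{n_k}$ are shown to be $\lesssim \delta^{(t-k)/2}\,d_C(c_\nu)^{-36\alpha/\lambda}$, summing to a constant. No uniform lower bound on the individual terms is involved.

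You have also reversed the deep-return condition. From the definition, taking $i$ to be any earlier free return, $\sum_{j\in[i+1,\nu],\ \text{free}}2\log d_C(c_j)\le\log d_C(c_i)$ forces $2\log d_C(c_\nu)\le\log d_C(c_i)$, i.e.\ $d_C(c_\nu)\le\sqrt{d_C(c_i)}$: the deep return is the \emph{deepest so far}, the opposite of what you wrote ($d_C(c_\nu)\ge d_C(c_j)^{1/2}$). Likewise the condition bounds $\prod_{i\in(j,\nu)\ \text{free}}d_C(c_i)$ from \emph{above} (by $\sqrt{d_C(c_j)}/d_C(c_\nu)$), not from below. The actual mechanism in the paper is to rewrite $d_C(c_{n_k})^{-1}\le d_C(c_\nu)^{-2}\prod_{n_j\in(n_k,\nu)}d_C(c_{n_j})^{-2}$ and then cancel each $d_C(c_{n_j})^{-36\alpha/\lambda}$ against $J^{p_j}(c_{n_j})^{-1}\le d_C(c_{n_j})^{1-16\alpha/\lambda}$ from Lemma \ref{reclem1}(b), leaving a factor $d_C(c_{n_j})^{1/2}<\delta^{1/2}$ for each intermediate return, hence a geometric sum in $k$. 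Your sketch has no analogue of this cancellation.

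Finally, your proposal silently uses (G3)$_{n,c}$ to lower-bound $d_S(c_j)$ on the free part. The paper explicitly notes that this is not enough, since free iterates may land very close to $S$; it handles this by pairing a small $d_S(c_i)$ with the corresponding large derivative $J(c_i)\gtrsim d_S(c_i)^{-1}$ (so the product $J^{\nu-i}(c_i)d_S(c_i)$ stays large), and by refining the bound/free structure with $C_{\delta^{1/20}}$ to make the resulting bound $\frac{\Theta_0}{J^\nu(c_0)}<\delta^{-1/3}$. Without something along these lines, the free-segment contribution is not controlled. In short, the overall shape of the argument (reduce to bounding $\sum_j J^\nu(c_0)^{-1}d_j^{-1}(c_0)$) is right, but the reduction to a uniform per-term bound, the direction of the deep-return inequality, and the handling of the singular set all need to be corrected.
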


\begin{proof}
Let $0<n_1<\cdots<n_t<\nu$ denote all free
returns in the first $\nu$ iterates of $c_0$, with
$p_1,\cdots,p_t$ the corresponding bound periods. Let
$$
\Theta_{n_k}=\sum_{i=n_k}^{n_k+p_k-1}d_i^{-1}(c_0) \ \ \ \text{
and } \ \ \ \Theta_{0}=\sum_{i=0}^{\nu-1}d_i^{-1}(c_0)
-\sum_{k=1}^{t} \Theta_{n_k}.
$$

\smallskip

\noindent {\it Step 1 (Estimate for bound segments):} 
Observe that
\begin{align*}
& \frac{\Theta_{n_k}}{J^{n_k+p_k}(c_0)} =\frac{1}{J^{p_k}(c_{n_k}) d_C(c_{n_k})
d_S(c_{n_k})} + \sum_{i = n_k+1}^{n_k+p_k-1}
\frac{1}{J^{n_k+p_k-i}(c_i) d_C(c_i) d_S(c_i)}.
\end{align*}
To estimate the first term we use Lemma \ref{reclem1}(b) to obtain
\begin{equation}\label{f1-deep}
\frac{1}{J^{p_k}(c_{n_k}) d_C(c_{n_k})
d_S(c_{n_k})}\leq \sigma^{-1}
(d_C(c_{n_k}))^{- \frac{16 \alpha}{\lambda}}.
\end{equation}
To estimate the second term we let $\tilde c$ be the critical
point to which $c_{n_k}$ is bound. By using Lemma \ref{dist} and
(\ref{derivative6}) in the proof of Lemma \ref{dist}, which
implies $d_C(c_i)>\frac{1}{2} d_C(\tilde c_{i-n_k-1})$ and
$d_S(c_i)> \frac{1}{2} d_S(\tilde c_{i-n_k-1})$ for $i \in [n_k+1,
n_k+p_k-1]$, we have
\begin{align*}J^{n_k+p_k-i}(c_i)d_C(c_i)d_S(c_i)
\geq K^{-1} J^{n_k+p_k-i}(\tilde c_{i-n_k-1})d_C(\tilde c_{i-n_k-1})d_S(\tilde c_{i-n_k-1}) \geq
\sigma^2L^{-5\alpha(i-n_k-1)}
\end{align*}
where the last inequality is obtained by using (G1), Lemma
\ref{add-lem1-s2.3B}(a) and (G3) for $\tilde c$. Summing this
estimate over all $i$ and combining the result with (\ref{f1-deep}), 
\begin{equation} \label{sublem2}
|(f^{n_k+p_k})'c_0|^{-1} \Theta_{n_k}\leq
 \sigma^{-1} (d_C(c_{n_k}))^{- \frac{16 \alpha}{\lambda}} + \sigma^{-2}L^{6\alpha
p_k} \leq (d_C(c_{n_k}))^{-\frac{18 \alpha}{\lambda}}.
\end{equation}
Here for the last inequality we use $\sigma \gg \delta$ and
$L^{6\alpha p_k}\leq|d_C(c_{n_k})|^{-\frac{12\alpha}{\lambda}}$
from Lemma \ref{reclem1}(a).

\medskip

\noindent {\it Step 2 (Estimate for free segments):} 
By definition,
$$
\frac{\Theta_0}{J^{\nu}(c_0)} = \sum_{i  \in [0, \nu-1] \setminus
(\cup [n_k, n_k + p-1])}\frac{1}{J^{\nu - i}(c_i)d_C(c_i)
d_S(c_i)}.
$$
Here we can not simply use (G3) for $d_S(c_i)$ in proving
(\ref{quatro}). We observe, instead, that either we have $v_i \not
\in S_{\sigma}$, for which $d_S(v_i)
> \sigma$; or $c_i \in S_{\sigma}$ for which we have

$$
J^{\nu - i}(c_i) =J^{\nu-i+1}(c_{i+1})J(c_i)\geq  
K^{-1} L^{\frac{\lambda}{3} (\nu - i + 1)} (d_S(c_i))^{-1}.
$$
It then follows, by using $d_C(c_i) > \delta$, that
\begin{equation}\label{deep}
\frac{\Theta_0}{J^{\nu}(c_0)} \leq   \sum_{i  \in [0, \nu-1]
\setminus (\cup [n_k, n_k + p-1])} K L^{-\frac{\lambda}{3}
(\nu-i)} (\sigma \delta)^{-1} \leq \frac{1}{\sigma \delta}.
\end{equation}
This estimate is unfortunately not good enough for (\ref{quatro}).
To obtained (\ref{quatro}), we need to use
$C_{\delta^{\frac{1}{20}}}$ in the place of $C_{\delta}$ to define
a new bound/free structure for each free segment out of
$C_{\delta}$. For the new free segments, we can now replace
$\delta$ by $\delta^{\frac{1}{20}}$ in (\ref{deep}); for the bound
segments, we use (\ref{sublem2}) with $d_C
> \delta$. We then obtain
\begin{equation}\label{quatro}
\frac{\Theta_0}{J^{\nu}(c_0)}\leq \frac{1}{\sigma
\delta^{\frac{1}{20}}} +  \sum_{i  \in [0, \nu-1] \setminus (\cup
[n_k, n_k + p-1])} K L^{-\frac{1}{3} \lambda (\nu - i)} \delta^{-
\frac{18 \alpha}{\lambda}} < \frac{1}{\delta^{\frac{1}{3}}}.
\end{equation}

\noindent {\it Step 3 (Proof of the Lemma):} \ From the assumption
that $\nu$ is a deep free return, we have
\begin{equation*}|d_C(c_{n_k})|^{-1}\leq
|d_C(c_{\nu})|^{-2}\prod_{j\colon n_j\in(n_k,\nu)}
|d_C(c_{n_j})|^{-2}.
\end{equation*} Substituting this into
(\ref{sublem2}) gives
\begin{equation}\label{plu}
J^{n_k+p_k}(c_0)^{-1}\Theta_{n_k}\leq
|d_C(c_{{\nu}})|^{-\frac{36\alpha}{\lambda}}\prod_{j\colon
n_j\in(n_k,\nu)}|d_C(c_{n_j})|^{-\frac{36\alpha}{\lambda}}.\end{equation}
Meanwhile, splitting the orbit from time $n_k+p_k+1$ to $\nu$ into
bound and free segments and  we have
\begin{equation}\label{plu2}
J^{\nu-n_k-p_k}(c_{n_k+p_k})^{-1}\leq \left(\prod_{j\colon
n_j\in(n_k,\nu)} J^{p_j}(c_{n_j})\right)^{-1}.\end{equation}
Multiplying (\ref{plu}) with (\ref{plu2}) gives
\begin{align*}
J^{\nu}(c_0)^{-1}  \Theta_{n_k} \leq
|d_C(c_{\nu})|^{-\frac{36\alpha}{\lambda}}\prod_{j: \ n_j \in
(n_k, \nu)} \left(J^{p_j}(c_{n_j}) \cdot
|d_C(c_{n_j})|^{\frac{36\alpha}{\lambda}}\right)^{-1}
\end{align*}
\begin{align*}
\leq |d_C(c_{\nu})|^{-\frac{36\alpha}{\lambda}} \prod_{j: \ n_j
\in (n_k, \nu)} (d_C(c_{n_j}))^{\frac{1}{2}}  \ \leq \
\delta^{(t-k)/2}|d_C(c_{\nu})|^{-\frac{36\alpha}{\lambda}}.
\end{align*} where for the second inequality we use Lemma
\ref{reclem1}(b) for $J^{p_j}(c_{n_j})$, and  for the last we
use $d_C(c_{n_j}) < \delta$. Thus
\begin{align*} \sum_{n_k \in [0, \nu-1]} J^{\nu}(c_{0}) ^{-1}\Theta_{n_k}&\leq
|d_C(c_{\nu})|^{-\frac{36\alpha}{\lambda}}
\sum_{k=1}^{t}\delta^{(t-k)/2}\leq
2|d_C(c_{\nu})|^{-\frac{36\alpha}{\lambda}}.
\end{align*}
Combining this with (\ref{quatro}) we obtain
\begin{align*}J^{\nu}(c_{0})^{-1}D_{\nu}^{-1}=
\sqrt {L}\left( \sum_{1 \leq k \leq t} J^{\nu}(c_{0}) ^{-1}
\Theta_{n_k}+ J^{\nu}(c_{0}) ^{-1} \Theta_0\right)\leq
\frac{1}{\sqrt{d_C(c_{\nu})}}.
\end{align*}
This completes the proof of Lemma \ref{exp}. \end{proof}

\section{Measure of the set of excluded parameters}\label{s3}
The rest of the proof of the theorem goes as follows. For $n > N_0$,
let
$$
\Delta_n = \{ a \in \Delta_{N}: \ \ \text{(R1$)_n$ and (G3$)_n$ hold}\},
$$
and set $\Delta=\cap_{n\geq N}\Delta_n$. This is our parameter set in the theorem. In this section we show that $\Delta$ has positive Lebesgue measure,
and $|\Delta|\to1$ as $L\to\infty$.
To this end, define two parameter sets as follows:
$$E_n=\{a\in\Delta_{n-1}\setminus\Delta_n\colon \text{(R$)_{n}$ fails for $f_a$}\};$$
$$E_n'=\{a\in\Delta_{n-1}\setminus\Delta_n\colon \text{(G3$)_{n}$ fails for $f_a$}\}.$$
Obviously, $\Delta_{n-1}\setminus\Delta_n\subset E_n\cup E_n'$.
We show that the measures of these two sets
decrease exponentially fast in $n$.
Building on preliminary results in Sect.\ref{paradist}, in Sect.\ref{R1} 
we estimate the
measure of $E_n$. In Sect.\ref{R2} we estimate the measure of $E_n'$
and complete the proof of the theorem.

\begin{remark}\label{indhyp}
{\rm The following will be used in the argument:
Let $a\in\Delta_{n-1}$. By the definition of $\Delta_{n-1}$,
 (R$)_{n-1}$ holds, and thus by Lemma \ref{derive0}, (G1$)_{n-1}$  (G2$)_{n-1}$ hold.}
\end{remark}

\subsection{Equivalence of derivatives and distortion in parameter space}\label{paradist} 
For $c \in C$ and $i\geq0$, we define $\gamma_i^{(c)}: \Delta_{N} \to S^1$ by
letting $\gamma_i^{(c)}(a) = f^{i+1}_a(c)$.
In this subsection we denote $c_i(a)=\gamma_i^{(c)}(a)$
and
$\tau_i(a) = \frac{d
c_i(a)}{da}$.  
\begin{lemma}\label{trans}
Let $a\in\Delta_{n-1}$.
Then, for
all $c\in C$ and $k \leq n$ we have
$$
\frac{1}{2}\leq\frac{\left|\tau_k(a)\right|}
{\left|(f_a^k)'c_0(a)\right|}\leq 2.
$$
\end{lemma}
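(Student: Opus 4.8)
The plan is to derive a recursion for $\tau_k(a)$ by differentiating the relation $c_{i+1}(a)=f_a(c_i(a))$ with respect to $a$, and then compare $\tau_k$ with $(f_a^k)'(c_0)$ by a telescoping/distortion argument exactly analogous to the one used for Lemma \ref{dist}. Writing $f_a(x)=x+a+L\ln|\Phi(x)|$, note that $\partial_a f_a(x)=1$, so differentiating gives $\tau_{i+1}=f_a'(c_i)\,\tau_i+1$, with $\tau_0=\partial_a(c_0(a))=\partial_a f_a(c)=1$ (here $c$ is a fixed critical point, independent of $a$). Solving this linear recursion yields
\[
\tau_k(a)=(f_a^k)'(c_0)\left(1+\sum_{j=1}^{k}\frac{1}{(f_a^{j})'(c_0)}\right),
\]
so that $\tau_k/(f_a^k)'(c_0)=1+\sum_{j=1}^{k}J^{j}(c_0)^{-1}$. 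Thus the lemma is equivalent to the bound $\sum_{j=1}^{k}J^{j}(c_0)^{-1}\leq\frac12$, i.e. the sum of reciprocals of the accumulated derivatives along the critical orbit is small.

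First I would establish this reciprocal-sum bound. Since $a\in\Delta_{n-1}$, Remark \ref{indhyp} gives that (G2$)_{n-1}$ holds, hence $J^{j}(c_0)\geq L^{\lambda j}$ for all $1\leq j\leq n\geq k$. Therefore
\[
\sum_{j=1}^{k}\frac{1}{J^{j}(c_0)}\leq\sum_{j=1}^{\infty}L^{-\lambda j}=\frac{1}{L^{\lambda}-1},
\]
which is $\leq\frac12$ once $L$ is large (as we assume throughout, $L\geq L_1$). This gives $\frac12\leq 1-\frac12\leq \tau_k/(f_a^k)'(c_0)\leq 1+\frac12\leq 2$, which is the claimed two-sided estimate. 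A small point to check is that differentiation under the composition is legitimate: $c_i(a)\notin C\cup S$ for $0\leq i\leq n$ (this is part of the standing structure in Sect.\ref{s2.4}, item (i), and more basically follows from $a\in\Delta_{n-1}\subset\Delta_N$ together with the (G)-conditions keeping the critical orbit away from $C_\sigma\cup S_\sigma$), so $f_a$ is $C^2$ along the orbit segment and the chain rule applies; moreover $(f_a^k)'(c_0)\neq0$ for the same reason, so dividing is legitimate.

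The main obstacle — really the only subtlety — is the interchange of the two derivatives $\partial_a$ and the orbit iteration, i.e. making sure $\tau_i(a)$ is well-defined and the recursion $\tau_{i+1}=f_a'(c_i)\tau_i+1$ is valid on all of $\Delta_{n-1}$. This needs $c_i(a)$ to avoid $S$ (where $f_a$ and its $x$-derivative blow up) and, for the statement to say anything, the orbit should also avoid $C$ (so that $(f_a^k)'(c_0)\neq 0$); both hold under the hypothesis $a\in\Delta_{n-1}$, since that forces $f_a^{i+1}(c)\notin C_\sigma\cup S_\sigma$ for $0\leq i\leq N$ and, via (G1)$_{n-1}$, $|f_a'(c_i)|\geq L\min\{\sigma,L^{-\alpha i}\}>0$ for all $i\leq n$, which in particular gives $c_i\notin C$. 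Once this is in place, everything reduces to the elementary geometric-series estimate above, with no distortion machinery needed beyond invoking (G2).
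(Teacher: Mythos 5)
Your proposal is correct and is essentially the same argument the paper gives: derive the recursion $\tau_{k}=1+f_a'(c_{k-1})\tau_{k-1}$, divide by $(f_a^k)'(c_0)$ to express the ratio as $1+\sum_{j=1}^k\bigl((f_a^j)'c_0\bigr)^{-1}$, and then bound the sum using (G2)$_{n-1}$ (via Remark \ref{indhyp}) with a geometric series. One very minor point of care: when you replace $\bigl((f_a^j)'c_0\bigr)^{-1}$ by $J^j(c_0)^{-1}$ you should keep in mind that the former may be negative, so the bound is really on $\bigl|\sum_j ((f_a^j)'c_0)^{-1}\bigr|$; this is harmless since (G2) bounds the absolute value, but the identity you wrote should be read with signed derivatives.
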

\begin{proof}
We have 
\begin{equation}\label{f1-s3.2A}
\tau_{k}(a)=1+f_a'c_{k-1}(a)\cdot\tau_{k-1}(a).
\end{equation}
Using this inductively and then dividing the result by
$(f_a^{k})'c_0(a)$, which is nonzero by (G2$)_{n-1}$
we obtain
$$
\frac{\tau_{k}(a)}{(f_a^{k})'c_0(a)}=1+\sum_{i=1}^{k}\frac{1}
{(f^i)'c_0(a)}.
$$
This lemma then follows from applying (G2)$_{n-1}$. 
\end{proof}

For $a_*\in[0,1)$, $c \in C$ and $c_0=fc$, define
$$
I_n(a_*,c)=[a_*-D_{n}(c_0), a_*+D_{n}(c_0)]
$$
where $
D_n(c_0)$
is the same as the one in (\ref{Theta}) with $f = f_{a_*}$. 

\begin{lemma}\label{samp}
Let $a_*\in\Delta_{n-1}$.
For all $c\in C$, $a \in I_{n}(a_*,c)$
and $k \leq n$ we have
$$
\frac{1}{2} < \frac{|\tau_{k}(a)|}{|\tau_{k}(a_*)|}\leq 2.
$$
\end{lemma}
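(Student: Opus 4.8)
The plan is to derive this from Lemma \ref{trans} together with a distortion estimate in parameter space that mirrors the phase-space distortion Lemma \ref{dist}. First I would write, for any two parameters $a,b$ in $I_n(a_*,c)$ and any $k\le n$,
$$
\log\frac{|\tau_k(a)|}{|\tau_k(b)|}=\sum_{j=0}^{k-1}\log\frac{|\tau_{j+1}(a)|/|\tau_j(a)|}{|\tau_{j+1}(b)|/|\tau_j(b)|},
$$
and use the recursion \eqref{f1-s3.2A} to relate the ratio $\tau_{j+1}/\tau_j$ to $f_a'c_j(a)$ up to a multiplicative error controlled by Lemma \ref{trans} (which applies since all parameters in $I_n(a_*,c)$ lie in $\Delta_{n-1}$, or can be assumed to by shrinking $N$, and since $\tau_j\ne0$ there). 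Thus up to bounded factors the problem reduces to estimating $\sum_{j=0}^{k-1}$ of $|c_j(a)-c_j(b)|\cdot\sup|f''|/|f'|$ along the orbit, exactly as in the proof of Lemma \ref{dist}, except that $|c_j(a)-c_j(b)|$ is now an increment in the parameter direction rather than the phase direction.

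The key point is that the parameter-space increment is comparable to the phase-space one: by Lemma \ref{trans}, $|c_j(a)-c_j(b)|$ is, to within a factor of $4$, equal to $\int_b^a|\tau_j|\,\le\, |a-b|\sup|\tau_j|\le 4|a-b|\,J^j(c_0)$ (using the mean value theorem and Lemma \ref{trans} again, after a bootstrap showing the $\tau_j$ don't vary too much, which is what we are proving — so one runs the standard simultaneous induction on $k$). Since $|a-b|\le 2D_n(c_0)$ and $J^j(c_0)D_n(c_0)$ is controlled by the definition \eqref{Theta} of $D_n$, one gets $|c_j(a)-c_j(b)|\le K d_C(c_j)d_S(c_j)/\sqrt L\cdot d_j^{-1}(c_0)[\sum_i d_i^{-1}(c_0)]^{-1}$, and then the same computation as in \eqref{derivative6} and the lines following it in the proof of Lemma \ref{dist} — invoking Lemma \ref{derivative}(a) for $|f''|/|f'|\le K_0^2/(d_C d_S)$ — shows each summand is at most $\log 2\cdot d_j^{-1}(c_0)[\sum_i d_i^{-1}(c_0)]^{-1}$, and also that $c_j(a)$ stays out of $C\cup S$. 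Summing over $j$ yields $\tfrac12\le J_a^k(\cdot)/J_b^k(\cdot)$-type control, hence $\tfrac12<|\tau_k(a)|/|\tau_k(a_*)|\le2$ after taking $b=a_*$ and absorbing the Lemma \ref{trans} constants into a slightly better constant in the distortion sum.

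The main obstacle is the circularity inherent in the bootstrap: to bound $|c_j(a)-c_j(b)|$ we need $\sup|\tau_j|\approx J^j$, which is the $k=j$ case of the conclusion, so the argument must be organized as an induction on $k$ where the distortion bound \eqref{disteq}-analogue at stages $<k$ is used to establish the derivative-comparison at stage $k$, exactly as Lemma \ref{dist} is proved by induction on $j$. One must also be slightly careful that the interval $I_n(a_*,c)$ is genuinely contained in (a set where the inductive hypotheses apply) — this is where one may need to first restrict attention to $a\in I_n(a_*,c)\cap\Delta_{N}$ and note that the orbit-disjointness from $C\cup S$ is part of what the induction delivers, so no external input beyond $a_*\in\Delta_{n-1}$ and Lemmas \ref{derivative}, \ref{dist}, \ref{trans} is needed.
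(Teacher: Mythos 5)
Your proposal follows essentially the same route as the paper's proof: a bootstrap induction on $k$ using the telescoping sum of $\log\bigl(\tau_{j+1}(\cdot)/\tau_j(\cdot)\bigr)$, splitting each term via the recursion \eqref{f1-s3.2A} into the small ``$+1$'' error (controlled because $\tau_j$ is bounded below, via Lemma \ref{trans} together with the inductive hypothesis) and a distortion term for $f'$ along the parameter-orbit, bounded exactly as in Lemma \ref{dist} once Lemma \ref{trans} and the inductive hypothesis convert $|\gamma_j^{(c)}(I_n)|$ into something of order $J^j(c_0)D_n(c_0)$. One small correction: you cannot assume $I_n(a_*,c)\subset\Delta_{n-1}$ (nor does shrinking $N$ arrange this), but this is harmless since, as you observe at the end, Lemma \ref{trans} need only be applied at $a_*$ itself, and the inductive bound $|\tau_j(a)|\ge\tfrac12|\tau_j(a_*)|$ transfers the control to general $a\in I_n(a_*,c)$.
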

\begin{proof} Let $k \leq n$. To prove this lemma we
inductively assume that, for all $j < k$,
\begin{equation}\label{samp-induct}
\frac{|\tau_j(a)|}{|\tau_j(a_*)|}\leq 2, \ \ \ \text{for all} \ a
\in I_{n}(a_*, c).
\end{equation}
We then prove the same estimate for $j = k$. Write $I_{n}$ for $I_{n}(a_*,c)$. For all $a \in I_{n}$
we have 
\begin{align*}
\left|\log\frac{|\tau_{j+1}(a)|}{|\tau_{j+1}(a_*)|}-
\log\frac{|\tau_{j}(a)|}{|\tau_{j}(a_*)|}\right| \leq&
\left|\log\frac{|\tau_{j+1}(a)|}{|\tau_j(a)|}-
\log|f_{a_*}'c_j(a_*)|\right|\\
+\left|\log\frac{|\tau_{j+1}(a_*)|}{|\tau_j(a_*)|}-
\log|f_{a_*}'c_j(a_*)|\right|&\leq (I)_a +(I)_{a_*} + (I\!I),
\end{align*}
where
$$
(I)_a = \left|\log\frac{\tau_{j+1}(a)} {\tau_j(a)}-
\log|(f_{a})'c_j(a)|\right|, \ \ \ (I\!I) = \left|\log
\frac{(f_{a})'c_j(a)}{(f_{a_*})'c_j(a_*)} \right|.
$$
We claim that
\begin{equation}\label{twoa}
|(I\!I)|\leq 2L^{-\frac{1}{3}}\cdot d_j^{-1}\left[\sum_{i=0}^{n-1}d_i^{-1}\right]^{-1},
\end{equation}
where $d_i$ is the same as the one in (\ref{Theta}) with $f=f_{a_*}$.

To prove (\ref{twoa}), first we use (\ref{samp-induct}) and Lemma
\ref{trans} to obtain
\begin{align}\label{R1add}
|\gamma_j^{(c)}(I_{n})|\leq 2|\tau_j(a_*)||I_{n}|\leq
4|(f_{a_*}^j)' c_0(a_*)||I_{n}| \leq
2L^{-\frac{1}{2}}d_C(c_j(a_*))d_S(c_j(a_*)).
\end{align}
This implies $d_S(c_j(a))\geq \frac{1}{2} d_S(c_j(a_*))$ for all
$a\in I_{n}$. Thus from Lemma
\ref{derivative}(b),
$$|f''|\leq \frac{KL}{d_S(c_j(a_*))^{2}}\quad\text{on $\gamma_j^{(c)}(I_{n})$.}$$
It then follows that
$$
\left|f_{a}'c_j(a)-f_{a_*}'c_j(a_*)\right| =
\left|f_{a_*}'c_j(a)-f_{a_*}'c_j(a_*)\right| \leq
\frac{KL}{d_S(c_j(a_*))^{2}}|c_j(I_{n})| \leq
\frac{KL}{d_S(c_j(a_*))^{2}}|\tau_j(a_*)| \cdot |I_{n}|,
$$
where (\ref{samp-induct}) is again used for the last inequality.
We have then
\begin{equation*}
\left|f_{a}'c_j(a)-f_{a_*}'c_j(a_*)\right| \leq
\frac{KL^{\frac{1}{2}}d_C(c_j(a_*))}{d_S(c_j(a_*))}d_j^{-1}
\left(\sum_{i=0}^{n-1} d_i^{-1}\right)^{-1}\leq L^{-\frac{1}{3}}|(f_{a_*}')c_j(a_*)|d_j^{-1}\left(\sum_{i=0}^{n-1} d_i^{-1}\right)^{-1},
\end{equation*}
where for the last inequality we used Lemma \ref{derivative}(a).
(\ref{twoa}) follows directly from the last estimate.

As for $(I)_a$, we have from (\ref{f1-s3.2A}),
\begin{equation}\label{onea}
(I)_a\leq \log \left(1+ \frac{1}{|(f_{a})'c_j(a)| \cdot
|\tau_j(a)|}\right) < \frac{1}{|(f_{a})'c_j(a)| \cdot|\tau_j(a)|}
< L^{-\frac{\lambda}{3}j}
\end{equation}
where for the last estimates we use the inductive assumption
(\ref{samp-induct}) and Lemma \ref{trans} for $|\tau_j(a_*)|$. We
also use
$$
|(f_{a})'c_j(a)| \geq\frac{1}{2}|(f_{a_*})'c_j(a_*)| \geq \frac{
L}{2}\cdot\min\{\sigma,L^{-\alpha j} \}
$$
where the second inequality follows from
from (G1). Then $\frac{|\tau_k(a)|}{|\tau_k(a_*)|}\leq 2$ now
follows from combining (\ref{onea}) and (\ref{twoa}) for all $j <
k$. \end{proof}

\subsection{Exclusion on account of (R)}\label{R1}
To estimate the measure of parameters excluded due to (R$)_n$, we divide the critical orbit $\{ v_i, \ i \in [0, n] \}$ into
free/bound segments for $a \in \Delta_{n-1}$, $c \in C$, and let
$t_1 < t_2 < \cdots < t_{q} \leq n$ be the consecutive times for
{\it deep free returns} to $C_{\delta}$. Let $c^{(i)}$ be the
corresponding binding critical point at time $t_i$,  $r_i$ is the unique
integer such that $|c_i-f^{\nu_i}x_*|\in(L^{-r_i},L^{-r_i+1}].$
We call
$$
{\bf i}: = (t_1, r^{(1)}, c^{(1)}; \ t_2, r^{(2)}, c^{(2)}; \
\cdots; \ t_q, r^{(q)}, c^{(q)})
$$
the {\it itinerary} of $v_0 = f_a(c)$ up to time $n$.

Let $E_n(c, {\bf i})$ denote the set of
all $a\in E_n$ for which 
(R$)_{n,c}$ fails at time $n$, and
the itinerary for
$c_0(a)=f_ac$ up to time $n$ is ${\bf i}$.
To estimate the measure of $E_n$, 
we first estimate the measure of $E_n(c, {\bf i})$.
We then combine it with a bound on the nubmer of all feasible
itineraries.

\begin{lemma}\label{pro-R1}
$|E_n(c, {\bf i})| \leq L^{-\frac{1}{3} R}$, where $R = r_1 + r_2
\cdots + r_q$.
\end{lemma}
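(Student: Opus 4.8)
The plan is to estimate $|E_n(c,\mathbf i)|$ by a telescoping/inductive argument over the deep returns $t_1<\dots<t_q$, using the expansion estimate at deep return times (Lemma \ref{exp}) together with the parameter--phase space comparison (Lemmas \ref{trans} and \ref{samp}). The key point is that a deep return at time $t_k$ with $|c_{t_k}(a)-\tilde c|\in(L^{-r_k},L^{-r_k+1}]$ forces the parameter $a$ to lie in a short interval whose length is controlled by $D_{t_k}(c_0)$, and that after the bound period the map $\gamma_{t_{k+1}}^{(c)}$ expands this interval by a definite factor. Iterating, the set of admissible $a$ with itinerary $\mathbf i$ that still fails (R) at time $n$ shrinks geometrically in $R=r_1+\dots+r_q$.

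The steps, in order. \emph{Step 1.} Fix $a_*\in E_n(c,\mathbf i)$ and work on $I_n(a_*,c)=[a_*-D_n(c_0),a_*+D_n(c_0)]$. By Lemma \ref{samp} the derivatives $|\tau_k(a)|$ are comparable to $|\tau_k(a_*)|$ on this interval for all $k\le n$, and by Lemma \ref{trans} the latter is comparable to $|(f_{a_*}^k)'c_0(a_*)|$; hence $\gamma_k^{(c)}$ has almost constant, almost-equal-to-phase-derivative expansion on the relevant subintervals. \emph{Step 2.} At the deep return time $t_k$, the condition $|c_{t_k}(a)-\tilde c|\le L^{-r_k+1}$ combined with Step 1 confines $a$ to an interval of length $\lesssim L^{-r_k}/|\tau_{t_k}(a_*)|$, which by Lemma \ref{trans} is $\lesssim L^{-r_k}\,|(f_{a_*}^{t_k})'c_0(a_*)|^{-1}$. \emph{Step 3.} Use Lemma \ref{exp}: since $t_k$ is a deep return, $|(f_{a_*}^{t_k})'c_0(a_*)|\,D_{t_k}(c_0)\ge\sqrt{d_C(c_{t_k})}\ge\tfrac12 L^{-r_k/2}$, so the ratio of the length of the admissible $a$-interval at stage $k$ to the length $D_{t_{k-1}}$-scale interval from stage $k-1$ is at most (a constant times) $L^{-r_k}\cdot L^{r_k/2}=L^{-r_k/2}$, up to absorbing the bound-period factors via (G2)$_{n-1}$ and Lemma \ref{reclem1}(b). \emph{Step 4.} Multiply these ratios over $k=1,\dots,q$ and compare the final interval length with $|I_n(a_*,c)|=2D_n(c_0)$, using that the stages between deep returns only expand; one gets $|E_n(c,\mathbf i)|\lesssim L^{-\frac12 R}\cdot(\text{starting length})$, and after tracking constants and using $R$ large (which we may assume, else the bound is trivial) this improves to the stated $L^{-\frac13 R}$, with the $\tfrac13$ rather than $\tfrac12$ leaving room for the constants $K_0$, the factor $2$'s, and the bound-period slack $L^{O(\alpha p_k)}$.

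The main obstacle is Step 3--4: correctly bookkeeping the compounding effect of the bound periods inserted between consecutive deep returns, so that the expansion furnished by Lemma \ref{exp} at the \emph{deep} returns is not eaten up by the possibly many intervening shallow returns. The deep-return definition (\ref{inessential}) is exactly what makes this work --- it guarantees $\prod_{j:\,n_j\in(t_k,t_{k+1})}d_C(c_{n_j})$ is controlled by $d_C(c_{t_k})$ and hence the shallow returns between two deep returns cost at most a bounded power of $d_C(c_{t_k})$ --- but assembling this into a clean product estimate with explicit constants, and checking that the residual exponent stays below $\tfrac13 R$, is the delicate part. A secondary technical point is that the interval $I_n(a_*,c)$ depends on $a_*$, so one must either cover $E_n(c,\mathbf i)$ by such intervals and sum, or argue that $E_n(c,\mathbf i)$ is itself contained in one such interval up to bounded overlap; the comparison lemmas of Section \ref{paradist} are designed precisely to make this harmless.
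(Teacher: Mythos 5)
You have the right ingredients---the expansion estimate of Lemma \ref{exp} at deep returns and the parameter--phase-space comparisons of Lemmas \ref{trans} and \ref{samp}---but you are missing the proof's organizing structure, and what you defer as a ``secondary technical point'' is in fact its heart. The set $E_n(c,\mathbf i)$ is not contained in any single interval $I_n(a_*,c)$, nor in a single $I_{t_k}(a_*)$: the curve $\gamma_{t_k}^{(c)}$ wraps around $S^1$ many times for large $t_k$, so the preimage of the $L^{-r_k+1}$-ball around $c^{(k)}$ is a union of many short intervals, one in each monotone window. Thus your Step~2, which ``confines $a$ to an interval of length $\lesssim L^{-r_k}/|\tau_{t_k}(a_*)|$'', is only a local statement, and the telescoping in Steps~3--4 does not follow from it. The paper instead builds a $q$-level tree of covering intervals: at each level $k$ a countable, \emph{pairwise disjoint} family $\{I_{t_k}(a_{k,i})\}_i$ is chosen so that $E_n(c,\mathbf i)\subset\bigcup_i L^{-r_k/3}\cdot I_{t_k}(a_{k,i})$ (disjointness is Sublemma \ref{lem1}), and each level-$k$ interval nests into the shrunk version $2L^{-r_{k-1}/3}\cdot I_{t_{k-1}}(a_{k-1,j})$ of exactly one level-$(k-1)$ interval (nesting is Sublemma \ref{lem2}). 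It is this disjointness-plus-nesting that lets the per-level shrinkage factor at one deep return be multiplied against those at the other deep returns; without it there is no way to go from the local confinement to a global measure bound for the whole of $E_n(c,\mathbf i)$.

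Two subsidiary remarks. First, your worry that the gap between $1/3$ and $1/2$ must absorb accumulating constants and bound-period slack is off target: the $1/3$ is built directly into the covering factor. Lemma \ref{exp} gives $|\gamma_{t_k}^{(c)}(I_{t_k}(a))|\gtrsim J^{t_k}(c_0)\,D_{t_k}(c_0)\ge\sqrt{d_C(c_{t_k})}\gtrsim L^{-r_k/2}$, so the preimage of the $L^{-r_k+1}$-neighborhood of $c^{(k)}$ inside $I_{t_k}(a)$ has relative length $\lesssim L^{1-r_k/2}\ll L^{-r_k/3}$ (deep returns lie in $C_\delta$, so $r_k$ is large); the slack from $1/2$ down to $1/3$ is spent once per level, not cumulated. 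The bound-period bookkeeping you invoke via (G2)$_{n-1}$ and Lemma \ref{reclem1}(b) has already been performed inside the proof of Lemma \ref{exp} and does not reappear here. Second, the comparison lemmas of Sect.\ \ref{paradist} are used to show that $\gamma_{t_k}^{(c)}$ has bounded distortion and is injective on each single $I_{t_k}(a_*)$---the local facts on which the covering and nesting sublemmas rest---rather than to justify ``bounded overlap'' across different reference parameters $a_*$.
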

\begin{proof}
First of all, for $a_* \in R_n(c, {\bf i})$ and $1\leq k\leq q$
 we define a parameter interval
$I_{t_k}(a_*)$ as follows. 
 If $|\gamma_{t_k}^{(c)}(I_n(a_*,
c))|<\frac{1}{4}$, then we let $I_{t_k}(a_*)=I_{t_k}(a_*, c)$.
Otherwise, take $I_{t_k}(a_*)$ to be the interval of length
$\frac{1}{10|\gamma_{t_k}^{(c)} (I_{t_k}(a_*, c))|}|{I}_{t_k}(a_*, c)|$
centered at $a_*$.

The proof of Lemma \ref{pro-R1} is outlined as follows.
For a compact interval $I$ centered at
$a$ and $r>0$, let $ r\cdot I$ denote the interval of length
$r|I|$ centered at $a$.
For each $k\in[1,q]$, we choose a countable subset 
$\{a_{k,i}\}_{i}$ of $E_n(c,\bold i)$ with the following properties:

\begin{itemize}

\item[(i)] the intervals $\{I_{t_k}(a_{k,i})\}_{i}$ are pairwise
disjoint and $E_n(c,\bold i)\subset \bigcup _{i}L^{-r_k/3}\cdot
{I}_{t_k}(a_{k,i});$

\item[(ii)] for each $k\in[2,q]$ and $a_{k,i}$ there exists
$a_{k-1,j}$ such that ${I}_{t_k}(a_{k,i})\subset
2L^{-r_{k-1}/3}\cdot{I}_{{t_{k-1}}}(a_{k-1,j})$.
\end{itemize}
Observe that the desired estimate follows from this.
\smallskip

For the definition of such a subset we need two combinatorial
statements.
 The following elementary fact from
Lemma \ref{exp} is used in the proofs of these two sublemmas:
if $a\in E_n(c,\bold i)$, then $(\gamma_{t_k}^{(c)}|I_{t_k}(a))^{-1}(c^{(k)})$ consists of a single point and is contained in 
$L^{-r_k/3}\cdot I_{t_k}(a)$.

\begin{sublemma}\label{lem1}
If $a$, $a'\in E_n(c,\bold i)$ and $a'\notin I_{t_k}(a)$, then
$I_{t_k}(a)\cap I_{t_k}(a')=\emptyset$ .
\end{sublemma}

\begin{proof}
Suppose $I_{t_k}(a)\cap I_{t_k}(a')\neq\emptyset$. Lemma \ref{dist} gives
$|I_{t_k}(a)|\approx |I_{t_k}(a')|$. This and $a'\in I_{t_k}(a)$ imply
$(\gamma_{t_k}^{(c)}|I_{t_k}(a))^{-1}(c^{(k)})\neq(\gamma_{t_k}^{(c)}|I_{t_k}(a'))^{-1}(c^{(k)}).$ On
the other hand, by the definition of the intervals $I_{t_k}(\cdot)$,
$\gamma_{t_k}^{(c)}$ is injective on $I_{t_k}(a)\cup I_{t_k}(a')$. A
contradiction arises.
\end{proof}

\begin{sublemma}\label{lem2} If $a$, $a'\in E_n
(c,\bold i)$
 and
$a'\in L^{-r_k/3}\cdot I_{k}(a)$, then $I_{t_{k+1}}(a')\subset
2L^{-r_k/3}\cdot I_{t_k}(a)$.
\end{sublemma}

\begin{proof}
We have $(\gamma_{t_k}^{(c)}|I_{t_k}(a))^{-1}(c^{(k)})
\notin I_{t_{k+1}}(a')$, for
otherwise the distortion of $\gamma_{\nu_{k+1}}^{(c)}$ on $I_{t_{k+1}}(a')$ is
unbounded. This and the assumption together imply that one of the
connected components of $I_{t_{k+1}}(a')-\{a'\}$ is contained in
$L^{-r_k/3}\cdot I_{t_k}(a)$. This implies the inclusion.
\end{proof}

We are in position to choose subsets $\{a_{k,i}\}_{i}$ satisfying
(i) (ii). Lemma \ref{lem1} with $k=1$ allows us to pick a subset
$\{a_{1,i}\}$ such that the corresponding intervals $
\{I_{t_1}(a_{1,i})\}$ are pairwise disjoint, and altogether cover
$E_n(c,\bold i)$. Indeed, pick an arbitrary $a_{1,1}$. If $
I_{t_1}(a_{1,1})$ covers $E_n(c,\bold i)$, then the claim holds.
Otherwise, pick $a_{1,2}\in E_n(c,\bold i)-{I}_{t_1}(a_{1,1})$. By
Lemma \ref{lem1}, $I_{t_1}(a_{11}),{I}_{t_1}(a_{12})$ are disjoint.
Repeat this. By Lemma \ref{lem1}, we end up with a countable number of 
pairwise disjoint
intervals. To check the inclusion in (i), let $a\in
{I}_{1}(a_{1i})-L^{-r_1/3}\cdot{I}_{1}(a_{1i})$. By Lemma
\ref{exp}, $|f_a^{t_1+1}c-c^{(1)}|\gg L^{-r_1}$ holds. Hence $x\notin
E_n(c,\bold i)$.

Given $\{a_{k-1,j}\}_j$, we choose $\{a_{k,i}\}_i$ as follows. For
each $a_{k-1,j}$, similarly to the previous paragraph it is
possible to choose parameters $\{a_{m}\}_m$ in $E_n(c,\bold i )\cap
L^{-r_{k-1}/3}\cdot{I}_{t_{k-1}}(a_{k-1,j})$ such that the
corresponding intervals $\{I_{t_k}(a_m)\}_m$ are pairwise disjoint and
altogether cover $E_n(c,\bold i )\cap
L^{-r_{k-1}/3}\cdot{I}_{t_{k-1}}(a_{k-1,j})$. In addition, Lemma
\ref{lem2} gives $\bigcup_m{I}_{t_k}(a_{m})\subset 2L^{-r_{k-1}/3}
 \cdot{I}_{t_{k-1}}(a_{k-1,j}).$ Let $\{a_{k,i}\}_{i}=
\bigcup_{j}\{a_{m}\}$.
 This finishes the proof of Lemma
\ref{pro-R1}. \end{proof}

\begin{lemma}\label{boundlem2}
Assume that $f_a$ is such that $a \in \Delta_{N}$. Then the
lengths of any given bound period due to a return to $C_\delta$ is
$\geq \frac{1}{2} \alpha N$.
\end{lemma}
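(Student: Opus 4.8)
The plan is to bound the bound period from below by exploiting the definition of $I_p(c)$ together with the only upper bound available on the diameter of that interval, namely that a point $x$ bound to $c$ satisfies $|c-x| \le \delta$. Recall from Lemma \ref{reclem1}(a) that for $x \in I_p(c)$ we have $p \le \log|c-x|^{-2/\lambda}$; what I need here is the matching lower bound, and for that the key quantity to control is the diameter of $f^pI_p(c)$ relative to $D_p(c_0) - D_{p-1}(c_0)$, or equivalently, how many iterates it takes for an interval of initial size $\approx \delta^2$ near the critical point to grow past the binding scale.

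First I would unwind the definition. A point $x$ lies in $I_p(c)$ precisely when $fx \in [c_0 + D_{p-1}(c_0), c_0 + D_p(c_0))$ (or the mirror image), so by the non-degeneracy of $c$ (Lemma \ref{derivative}(c)) and $c_0 = fc$ we get $|c-x|^2 \asymp D_{p-1}(c_0)$ up to the constant $K_0$; combined with $x \in C_\delta$, i.e. $|c-x| < \delta$, this forces $D_{p-1}(c_0) \le K_0 \delta^2$. Now I would push this through the definition of $D_j(c_0)$ in \eqref{Theta}: since $D_{p-1}(c_0) = \tfrac{1}{\sqrt L}\big[\sum_{i=0}^{p-2} d_i^{-1}(c_0)\big]^{-1}$, the bound $D_{p-1}(c_0) \le K_0\delta^2 = K_0 L^{-2\alpha N}$ gives $\sum_{i=0}^{p-2} d_i^{-1}(c_0) \ge K_0^{-1} L^{-\frac12 + 2\alpha N}$. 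On the other hand, each term $d_i^{-1}(c_0) = J^i(c_0)/\big(d_C(c_i)d_S(c_i)\big)$ can be bounded above: by Lemma \ref{derive0} and Remark \ref{indhyp} the hypotheses (G1)-(G3) are in force (more precisely, by Lemma \ref{outside} we already have growth at a definite rate, and $d_C, d_S$ are bounded below by the $\sigma$-exclusion defining $\Delta_N$, at least in the range relevant here), so each summand is at most something like $L^{Ci}$ for an explicit constant $C$ independent of $N$ and $L$. Hence $\sum_{i=0}^{p-2} d_i^{-1}(c_0) \le (p-1) L^{C(p-2)} \le L^{C'(p-1)}$, and comparing with the lower bound $L^{-\frac12 + 2\alpha N}$ yields $C'(p-1) \ge -\tfrac12 + 2\alpha N$, i.e. $p \ge \tfrac{1}{C'}(2\alpha N - \tfrac12) + 1 \ge \tfrac12 \alpha N$ once $N$ is large enough (the finite adjustment of $N$ permitted in Sect.\ref{s2.2}).

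**The main obstacle** I anticipate is pinning down the correct upper bound on the individual terms $d_i^{-1}(c_0)$ in a way that is honestly uniform in $N$ and $L$: one must be careful that, for $i \le p-1$ with $p$ itself the bound period (so $i$ is within a bound period and the orbit $c_i$ may be close to $C \cup S$), the quantities $d_C(c_i)$ and $d_S(c_i)$ are not pathologically small. For $i < N$ this is immediate from membership in $\Delta_N$ (the $\sigma$-exclusion); for $i \ge N$ one invokes (G3)$_{n-1}$ and Sublemma \ref{add-lem1-s2.3B}(a) to get $d_S(c_i) \ge \sigma L^{-4\alpha i}$ and $d_C(c_i) \ge K^{-1}\sigma L^{-\alpha i}$, while $J^i(c_0)$ is controlled from above by the trivial bound $|f'| \le K_0 L / d_S \le K_0 L \sigma^{-1} L^{4\alpha i}$ on each factor. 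Assembling these gives $d_i^{-1}(c_0) \le (K_0 L \sigma^{-1})^i L^{C\alpha i^2}/(\sigma^2 L^{-5\alpha i})$, which is crude but still of the form $L^{\psi(i)}$ with $\psi$ polynomial in $i$; since we only ever evaluate at $i \le p - 1$ and $p$ is itself what we are bounding, one closes the loop by first getting the weak a priori bound $p \le \log|c-x|^{-2/\lambda} \le \tfrac{2\alpha N}{\lambda} \cdot \tfrac{1}{\log L} \cdot \log L$ — wait, more simply $p \le \frac{4\alpha N}{\lambda}$ from Lemma \ref{reclem1}(a) with $|c-x| \le \delta = L^{-\alpha N}$ — so $p = O(N)$, hence $\psi(i) = O(N^2)$ uniformly, and the comparison $L^{O(N^2)} \cdot (\text{something}) \ge L^{2\alpha N - 1/2}$ still delivers $p \gtrsim N$ with an implied constant that degrades polynomially but harmlessly in $N$; adjusting $\alpha, N$ as allowed makes the constant exactly $\tfrac12$. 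I would present this last bookkeeping as a short computation rather than belabor it.
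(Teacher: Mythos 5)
Your underlying idea is the right one — show that $D_n(c_0)$ is still large for $n$ of order $\frac12\alpha N$, so that the bound period must exceed this — but the way you execute it introduces a dependence on hypotheses the lemma does not have, and the paper's proof deliberately avoids exactly this.

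The key structural difference: you estimate $\sum_{i=0}^{p-2}d_i^{-1}(c_0)$ from below (via $D_{p-1}\lesssim L\delta^2$) and then try to bound each $d_i^{-1}(c_0)$ from above for \emph{all} $i\le p-2$. Since you don't yet know how big $p$ is, you are forced to worry about $i\ge N$, and to handle those you reach for (G3)$_{n-1}$, Sublemma \ref{add-lem1-s2.3B}(a), and Lemma \ref{reclem1}(a). None of these are available under the sole stated hypothesis $a\in\Delta_N$; Lemma \ref{reclem1} explicitly assumes (G1)--(G3)$_{n,c}$, and Remark \ref{indhyp} gives you (G1)--(G3) only when $a\in\Delta_{n-1}$, which is not assumed here. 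Your attempt to close the circle by first invoking Lemma \ref{reclem1}(a) to get $p=O(N)$ has the same problem: that lemma is not at your disposal. The paper's proof sidesteps the whole issue by estimating $D_{[\frac12\alpha N]}(c_0)$ \emph{directly}, rather than $D_{p-1}(c_0)$. Since $\frac12\alpha N\ll N$ (recall $\alpha=10^{-6}$), every iterate $c_i$ with $i<\frac12\alpha N$ is already controlled by $a\in\Delta_N$: one gets $d_C(c_i),d_S(c_i)>\sigma$, hence $J^{[\frac12\alpha N]}(c_0)\le(\sigma^{-1}L)^{\frac12\alpha N}$ by Lemma \ref{derivative}(a), hence $D_{[\frac12\alpha N]}(c_0)\ge L^{-\frac12}\sigma^2(L^{-1}\sigma)^{\frac12\alpha N}\gg\delta>L\delta^2$. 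Since $D_n$ is decreasing in $n$ and $D_{p-1}\lesssim L\delta^2$, this forces $p-1>[\frac12\alpha N]$. No reference to (G1)--(G3), no a priori bound on $p$, no bootstrap.

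A smaller point: you write $|c-x|^2\asymp D_{p-1}(c_0)$, but from the definition $fx\in[c_0+D_{p-1},c_0+D_p)$ together with Lemma \ref{derivative}(c) ($|f''|\asymp L$ near $c$) the correct relation is $|c-x|^2\asymp D_{p-1}(c_0)/L$, i.e. $D_{p-1}(c_0)\lesssim L\delta^2$ rather than $\lesssim\delta^2$. This costs you an extra $L$ in the exponent, which is washed out once $\alpha N>1$, so it is not fatal — but it should be fixed. The main correction needed is to drop the attempt to control $d_i^{-1}(c_0)$ for $i$ up to $p-2$ and instead truncate the argument at $i<\frac12\alpha N$, where $\Delta_N$ membership alone suffices.
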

\begin{proof} From $a \in \Delta_{N}$ and Lemma
\ref{derivative}(a) we have $J^{[\frac{1}{2} \alpha
N]}(c_0)\leq(\sigma^{-1} L)^{\frac{1}{2} \alpha N}$ where
$\sigma = L^{-\frac{1}{6}}$. It then follows that
$$
D_{[\frac{1}{2} \alpha
N]}(c_0)\geq\frac{L^{-\frac{1}{2}}\sigma^2} {J^{[\frac{1}{2}
\alpha N]}(c_0)}\geq L^{-\frac{1}{2}}\sigma^2
(L^{-1}\sigma)^{\frac{1}{2} \alpha N}\gg \delta.
$$
This implies the lemma.  
\end{proof}

Observe that
$E_n= \bigcup E_n(c, {\bf i}),
$
where the union runs over all $c \in C$ and all
feasible itineraries ${\bf i} = (t_1,
r^{(1)}, c^{(1)}; \cdots; t_q, r^{(q)},
c^{(q)})$.
By Lemma \ref{boundlem2} we have
$
q \leq \frac{2 n}{\alpha N}.
$
We also have 
$
R = r_1 + r_2 + \cdots + r_q > \frac{\lambda \alpha n}{20}.
$
 The number of choices for $(r_1,
\cdots r_q)$ satisfying $r_1 + \cdots + r_q = R$ is $
\left(\begin{smallmatrix} R+q\\
q\end{smallmatrix}\right)$, and the possible number of choices for
the deep free return times is $\left(\begin{smallmatrix} n \\
q\end{smallmatrix}\right)$.
By Stirling's formula for
factorials, 
$
\left(\begin{smallmatrix} n \\
q\end{smallmatrix}\right) \leq e^{\beta(N) n}$ and $\left(\begin{smallmatrix} R+q \\
q\end{smallmatrix}\right) \leq e^{\beta(N) R}$,
where $\beta(N) \to 0$ as $N \to \infty$.
Using these and Lemma \ref{pro-R1} we conclude
that
\begin{equation}\label{Delete-R1}
|E_n| \leq  \sum_{1
\leq q \leq \frac{2 n}{\alpha N}}  \sum_{R > \frac{\lambda \alpha n}{20}}(\# C)^q \left(\begin{matrix} n \\
q\end{matrix}\right) \left(\begin{matrix} R+q \\
q\end{matrix}\right) L^{- \frac{R}{3} }\leq L^{-\frac{\lambda \alpha n}{100}},
\end{equation}
where the last inequality holds for sufficiently large
$N$.

\subsection{Exclusion on account of (G3)}\label{R2}
Estimates for exclusions due to
(G3$)_n$ are much simpler.

\begin{lemma}\label{wrap2}
For $a_* \in \Delta_{n-1}$, $c \in C$,
$
|\gamma_{n}^{(c)}(I_{n}(a_*, c))|\geq L^{-3\alpha n}$.
\end{lemma}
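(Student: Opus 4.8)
The plan is to control the image $\gamma_n^{(c)}(I_n(a_*,c))$ from below by transferring the phase-space expansion along the critical orbit of $c_0=f_{a_*}c$ into parameter space, using the near-equivalence established in Lemmas \ref{trans} and \ref{samp}. Concretely, I would first observe that by Lemma \ref{samp} the derivative $|\tau_n(a)|$ varies by at most a factor $2$ over $a\in I_n(a_*,c)$, so
\[
|\gamma_n^{(c)}(I_n(a_*,c))|\;\geq\;\tfrac12\,|\tau_n(a_*)|\cdot|I_n(a_*,c)|.
\]
Then Lemma \ref{trans} (valid since $a_*\in\Delta_{n-1}$) gives $|\tau_n(a_*)|\geq\tfrac12|(f_{a_*}^n)'c_0(a_*)|=\tfrac12 J^n(c_0)$. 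Since $|I_n(a_*,c)|=2D_n(c_0)$ by definition, this yields
\[
|\gamma_n^{(c)}(I_n(a_*,c))|\;\geq\;\tfrac14\,J^n(c_0)\cdot 2D_n(c_0)\;=\;\tfrac12\,J^n(c_0)D_n(c_0).
\]
So the lemma reduces to a lower bound $J^n(c_0)D_n(c_0)\geq 2L^{-3\alpha n}$ for all $c\in C$, under the standing hypotheses ($a_*\in\Delta_{n-1}$, hence (G1)$_{n-1}$--(G3)$_{n-1}$ by Remark \ref{indhyp}).

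The remaining task, bounding $J^n(c_0)D_n(c_0)$ below, is where the real work lies, and I would handle it by the same bound/free decomposition used in Lemmas \ref{derive0} and \ref{exp}. By Sublemma \ref{add-lem1-s2.3B}(b) one already has $J^i(c_0)D_{i+1}(c_0)\geq L^{-1-7\alpha i}$ for $0\le i\le n-1$; but the index $i$ there runs only up to $n-1$ while we want $n$, and more importantly $D_{i+1}$ is not $D_n$. The cleanest route is to note that $D_n(c_0)^{-1}=\sqrt{L}\sum_{i=0}^{n-1}d_i^{-1}(c_0)$ and to re-run the free/bound estimate from the proof of Lemma \ref{exp}: split $[0,n-1]$ into bound blocks $[n_k,n_k+p_k-1]$ and the complementary free indices, bound $J^n(c_0)^{-1}\Theta_{n_k}$ for bound blocks via \eqref{sublem2} and Lemma \ref{reclem1}(b), and bound the free contribution $J^n(c_0)^{-1}\Theta_0$ via the telescoping estimate \eqref{deep}/\eqref{quatro}; the key point is that for a general (not necessarily deep) free return one only loses a polynomial-in-$L^{-\alpha n}$ factor rather than the sharper $\sqrt{d_C(c_\nu)}$ of Lemma \ref{exp}. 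Since by Lemma \ref{derive0} (via Remark \ref{indhyp}) we have (G1)$_{n-1}$--(G2)$_{n-1}$, and the total length of bound periods up to $n$ is $\le\frac{2\alpha}{\lambda}n\ll n$, the cumulative loss is at most $L^{2\alpha n}$ or so, giving $J^n(c_0)D_n(c_0)\geq L^{-3\alpha n}$ comfortably (the exponent $3\alpha$ is generous).

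The main obstacle I anticipate is the treatment of free returns that come very close to the singular set $S$: exactly as in Step 2 of the proof of Lemma \ref{exp}, one cannot simply apply (G3) to $d_S(c_i)$ inside a free block, because $d_S(c_i)$ can be as small as $L^{-4\alpha i}$ and the naive bound is too lossy; the fix is to absorb such an $i$ into the adjacent stretch using $J^{n-i}(c_i)=J^{n-i+1}(c_{i+1})J(c_i)\ge K^{-1}L^{\frac\lambda3(n-i+1)}d_S(c_i)^{-1}$, which cancels the small $d_S(c_i)$, and then the geometric sum $\sum_i L^{-\frac\lambda3(n-i)}$ converges. A secondary, purely bookkeeping, obstacle is handling the last (possibly incomplete) bound or free block ending at time $n$ rather than at a return time — but since the bound period of any return is $\ll n$ by Lemma \ref{reclem1}(a) and Lemma \ref{boundlem2}, a partial block contributes at most the same polynomial loss and does not affect the $3\alpha$ budget.
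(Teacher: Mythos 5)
Your reduction to lower-bounding $J^n(c_0)\,D_n(c_0)$ via Lemmas \ref{trans} and \ref{samp} is exactly the paper's first step, and your plan would indeed succeed, but you have chosen a much heavier route than necessary. The paper does \emph{not} re-run a bound/free decomposition here: once Remark \ref{indhyp} (via Lemma \ref{derive0}) gives (G1)$_{n-1}$, the estimate is termwise. One writes $J^n(c_0)\,d_i = J^{n-i}(c_i)\,d_C(c_i)\,d_S(c_i)$ and bounds each factor directly: $J^{n-i}(c_i)\geq L\min\{\sigma,L^{-\alpha i}\}$ from (G1), $d_C(c_i)\geq K^{-1}\sigma L^{-\alpha i}$ from Sublemma \ref{add-lem1-s2.3B}(a), and for $d_S(c_i)$ one uses $d_S(c_i)>\sigma$ when $c_i\notin S_\sigma$ or the absorption $J(c_i)\,d_S(c_i)\geq 1$ with $J^{n-i-1}(c_{i+1})\geq L\sigma L^{-\alpha(i+1)}$ when $c_i\in S_\sigma$. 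This gives $J^n(c_0)\,d_i\geq K^{-1}\sigma L^{-2\alpha i}$ with no decomposition of the orbit at all, and summing the reciprocals immediately yields $L^{-3\alpha n}$. You did correctly anticipate that the problematic index set is where $c_i$ is $\sigma$-close to $S$, and your proposed absorption trick is the same idea as the paper's; but invoking \eqref{sublem2} and \eqref{quatro}, with their deep-return machinery, for a target as weak as $L^{-3\alpha n}$ is overkill, and it also obliges you to deal carefully with a possibly incomplete final block (as you noticed), a complication that simply does not arise in the paper's termwise estimate. In short: same opening move, same endgame inequality, but your middle game re-derives what Lemma \ref{derive0} already hands you for free.
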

\begin{proof} Write $f=f_{a_*}$, $c_i=f^{i+1}c$.
Since $a_* \in \Delta_{n-1}$, (G1)$_{n-1}$
and (G2)$_{n-1}$ hold for $c_0$ by Lemma \ref{derive0}. Using
Lemma \ref{trans} and \ref{samp}, we have
$$
|\gamma_n^{(c)}(I_n(a_*, c))| \geq 4 J^{n}(c_0) \cdot |I_n(a_*,c)| \geq \frac{1}{\sqrt{L}} \left(\sum_{i=0 }^{n-1}
(J^{n}(c_0) d_i)^{-1}\right)^{-1}.
$$
If $c_i\notin S_\sigma$, then
\begin{align*}J^{n}(c_0)d_i&=
J^{n-i}(c_i)d_C(c_i)d_S(c_i)\geq K L^{-2\alpha i}\sigma.
\end{align*}
If $c_i\in S_\sigma$, then $d_S(c_i)\geq |f'c_i|^{-1}$ from Lemma
\ref{derivative}(a), and we have
\begin{align*}J^{n}(c_0)d_i&=
J^{n-i}(c_i)d_C(c_i)d_S(c_i)\geq J^{n-i-1}(c_{i+1})
d_C(c_i)\geq L^{-\alpha(i+1)}\sigma
\end{align*}
where (G1)$_{n-1}$ is used for the last inequality. Hence we
obtain
\begin{align*}
|\gamma_n^{(c)}(I_n(a_*, c))| >
\frac{1}{\sqrt{L}}\left(\sum_{i=0}^{n-1}K L^{2\alpha i} \sigma
\right)^{-1}\geq L^{- 3\alpha n}.
\end{align*}
\end{proof}

We now estimate the measure of $E_n'$. For $c\in C$ and $s\in S$,
let $E_n'(c,s)$ denote the set of all $a\in E_n'$ such that 
$d(\gamma_n^{(c)}(a),s)< L^{-4 \alpha n}$. 
For $a\in E_n'(c,s)$,
define an 
interval $I_n(a)$ centered at $a$, similarly to the definition of
$I_{t_k}(\cdot)$ in the beginning of the proof of Lemma \ref{pro-R1}
(replace $t_k$ by $n$).
 We
claim that:

\begin{itemize}
\item if $a\in E_n'(c,s)$, then 
$I_n(a)\setminus L^{-\alpha n/2}\cdot I_n(a)$ does not intersect $E_n'(c,s)$;

\item if $a,\tilde a\in E_n'(c,s)$ and $\tilde a\notin
I_n(a)$, then $I_n(a)\cap I_n(\tilde a)=\emptyset$.
\end{itemize}

The first item follows from  Lemma \ref{wrap2} and Lemma \ref{samp}.
The second follows from the injectivity argument used in the proof of 
Lemma \ref{lem2}. It follows that
$
|E_n'(c, s)| \leq L^{-\frac{1}{2}\alpha n},
$
and threfore
\begin{equation}\label{delete-D2}
|E_n'| < \# C \# S \cdot L^{-\frac{1}{2} \alpha
n} < L^{- \frac{1}{3} \alpha n}.
\end{equation}
(\ref{Delete-R1}), (\ref{delete-D2}) and Lemma \ref{initial-1} altogether yield
$|\Delta|\to1$ as $L\to\infty$.

\appendix

\section{Proofs of Lemmas \ref{initial-1} and \ref{outside}}
In this appendix we prove Lemmas \ref{initial-1} and
\ref{outside}. Since various ideas and technical tools developed
in the main text are needed, the correct order for the reader is
to go over the main text first before getting into the details of
these two proofs.

\subsection{Proof of Lemma \ref{initial-1}.} 
Let $f=f_{a_*}$ and $c\in C$. Let $I_n(a_*,c)=[a_*-D_n(c_0),a_*+D_n(c_0)]$,
where $D_n(c_0)$ is the same as the one in (\ref{Theta}) with $f=f_{a_*}$.
 Let $\gamma_n^{(c)}(a)=f_a^{n+1}c$
and $\tau_n^{(c)}(a) =
\frac{d}{da} \gamma_n^{(c)}(a)$.

\begin{lemma}\label{lem1-appA}
Let $1\leq n\leq N_0$, $f=f_{a_*}$,
$a_* \in \Delta_{n-1}$ and let $c \in C$. We have:

\begin{itemize}

\item[(a)] $J^{j-i}(c_i) \geq (K^{-1} L
\sigma)^{j-i} $ for all $0 \leq i < j \leq n$;

\item[(b)] $
J^n (x)\leq 2J^n(y)$ for all $x,y
\in[c_0-D_n(c_0),c_0+D_n(c_0)]$;

\item[(c)] 
$\frac{1}{2}\leq\frac{|\tau_n^{(c)}(a_*)|}{J^n(c_0)}\leq 2$;

\item[(d)] 
$\frac{1}{2}\leq\frac{|\tau_n^{(c)}(a)|}{|\tau_n^{(c)}(a_*)|}\leq2$
for all $a\in I_n(a_*,c)$;

\item[(e)] 
$|\gamma_n^{(c)}(I_n(a_*,c))|\geq L^{1/7}\sigma$.
\end{itemize}
\end{lemma}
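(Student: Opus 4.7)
The plan is to establish (a)--(e) in sequence at fixed $n$; no induction on $n$ is needed, since the hypothesis $a_* \in \Delta_{n-1}$---which keeps the orbit segment $c_0, \ldots, c_{n-1}$ outside $C_\sigma \cup S_\sigma$---feeds directly into each step.

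First I would prove (a) pointwise. Lemma \ref{derivative}(a) gives $|f' c_i| \geq K_0^{-1} L \, d_C(c_i) / d_S(c_i) \geq K_0^{-1} L \sigma$ for $0 \leq i \leq n-1$, since $d_C(c_i) \geq \sigma$ and $d_S(c_i) \leq 1$; multiplying yields (a). Statement (b) is then immediate from Lemma \ref{dist}, whose hypothesis $c_i \notin C \cup S$ is implied by $c_i \notin C_\sigma \cup S_\sigma$.

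For (c) and (d) I would repeat the arguments of Lemmas \ref{trans} and \ref{samp} verbatim, substituting (a) of the present lemma for the (G1-2)$_{n-1}$ used there. In (c), the identity $\tau_n^{(c)}(a_*) / J^n(c_0) = 1 + \sum_{i=1}^{n} 1/J^i(c_0)$ combined with (a) produces a geometric series of ratio $(K^{-1} L \sigma)^{-1} \ll 1$, yielding the two-sided bound. In (d), the error terms $(I)_a$ and $(I\!I)$ of Lemma \ref{samp} are controlled by $1/J^j(c_0)$, and (a) supplies growth at rate $(K^{-1} L \sigma)^j$, strictly stronger than the $L^{\lambda j}$ used originally, so the inductive comparison of $|\tau_k(a)|$ and $|\tau_k(a_*)|$ transfers unchanged.

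The payoff is (e). Combining (c) and (d), $|\tau_n^{(c)}(a)| \geq \tfrac14 J^n(c_0)$ on $I_n(a_*, c)$, so
\[
|\gamma_n^{(c)}(I_n(a_*,c))| \;\geq\; \tfrac{1}{2} J^n(c_0) D_n(c_0) \;=\; \frac{1}{2\sqrt{L}} \left( \sum_{i=0}^{n-1} \frac{1}{J^{n-i}(c_i)\, d_C(c_i)\, d_S(c_i)} \right)^{-1},
\]
where I have used $J^n(c_0)/J^i(c_0) = J^{n-i}(c_i)$ to rewrite the denominator from the definition of $D_n$. By (a) together with $d_C(c_i), d_S(c_i) \geq \sigma$, each summand is at most $(K^{-1} L \sigma)^{-(n-i)} \sigma^{-2}$; the geometric sum is dominated by its last term, giving at most $K L^{-1} \sigma^{-3}$. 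Hence the image has length at least $K^{-1} \sqrt{L} \sigma^3$, which equals the constant $K^{-1}$ once $\sigma = L^{-1/6}$ is substituted, and this comfortably exceeds $L^{1/7} \sigma = L^{-1/42}$ for large $L$. The main obstacle I anticipate is checking that the parameter-space estimates of Lemmas \ref{trans}, \ref{samp} really apply in the absence of the (G1-3) hypotheses from the main text; but the very rapid expansion $L \sigma = L^{5/6}$ supplied by (a) dominates every geometric-series remainder in those proofs, so this amounts to routine bookkeeping rather than genuinely new work.
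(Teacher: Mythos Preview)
Your proposal is correct and follows essentially the same route as the paper: (a) from Lemma~\ref{derivative}, (b) from Lemma~\ref{dist}, (c)--(d) by rerunning the proofs of Lemmas~\ref{trans} and~\ref{samp} with the stronger expansion $(K^{-1}L\sigma)^j$ from (a) in place of (G1--2)$_{n-1}$, and (e) by the same geometric-series estimate on $J^n(c_0)D_n(c_0)$. The paper is terser---it simply asserts that (c) and (d) ``are in'' Lemmas~\ref{trans} and~\ref{samp}---but you are right to flag that those lemmas are stated under the (G1--3) hypotheses and that the substitution of (a) is what makes them go through here.
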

\begin{proof} Item (a) follows directly from Lemma
\ref{derivative}. (b) is in Lemma
\ref{dist}, (c) is in Lemma \ref{trans} and (d)
is in Lemma \ref{samp}.
As for (e), let $i\in[0,n-1]$. 
(a) gives
\begin{align*}J^n(c_0)d_i(c_0)=
J^{n-i}(c_i)
d_C(c_i)d_S(c_i)\geq
(K_0^{-1}L\sigma)^{n-i}\sigma^2,\end{align*} 
where $d_i(c_0)$ is the same as the one in (\ref{Theta})
with $f=f_{a_*}$.
Taking reciprocals
and then summing the result over all $i\in[0,n-1]$ we have
$$
\sum_{i=0}^{n-1}J^n(c_0)^{-1}d_i(c_0)^{-1}
\leq\frac{K_0}{L\sigma^3}.$$ Recall that $\sigma = L^{-\frac16}$.
We have
$$|\gamma_n^{(c)}(I_n(a_*,c))|^{-1}\leq KJ^n(c_0)^{-1}D_n^{-1}(c_0)= K
\sqrt{L}\cdot
\sum_{i=0}^{n-1}J^n(c_0)^{-1}d_i(c_0)^{-1}\leq
\frac{1}{L^{\frac{1}{7}}\sigma},$$ where the first inequality
follows from (c) and (d). \end{proof}

For $c \in C$, $s \in C \cup S$, let $E_n(c,s)$ denote the set of all
$a\in\Delta_{n-1}\setminus\Delta_n$ so that $d(\gamma_n^{(c)}(a), s)\leq\sigma$. For $a\in E_n(c, s)$, 
define a parameter interval
$I_n(a)$ as follows. If $|\gamma_n^{(c)}(I_n(a,
c))|<\frac{1}{4}$, then we let $I_n(a)=I_n(a, c)$.
Otherwise, define $I_n(a)$ to be the interval of length
$\frac{1}{10|\gamma_n^{(c)} (I_n(a, c))|}|{I}_n(a, c)|$
centered at $a$.
We claim that:

\begin{itemize}
\item if $a\in E_n(c,s)$, then 
$I_n(a)\setminus L^{-1/8}\cdot I_n(a)$ does not intersect $E_n(c,s)$;

\item if $a,\tilde a\in E_n(c,s)$ and $\tilde a\notin
I_n(a)$, then $I_n(a)\cap I_n(\tilde a)=\emptyset$.
\end{itemize}
The first item follows from Lemma
\ref{lem1-appA}(c)-(e). The second follows from the fact that
the map $\gamma_n^{(c)}$ is injective on $I_n(a)$.
Observe that from
this we have $|E_n(c, s)| < L^{-\frac{1}{8}}$, and it follows
that
$
|\Delta_{n-1} \setminus \Delta_n| < \#C (\#C + \#S) L^{-\frac18},
$
and we obtain
$$
|\Delta_{N}| \geq 1 - \sum_{n=1}^{N} |\Delta_{n-1}
\setminus \Delta_n| \geq 1 - L^{-\frac19}.
$$
This completes the proof of 
Lemma \ref{initial-1}. \qed

\subsection{Proof of Lemma \ref{outside}} We need
the materials in Sect.\ref{s2.3} and the definition of
the bound/free structure in Sect.\ref{s2.4}.

Let $\delta_0 = L^{-\frac{11}{12}}$. For the orbit of $x$ lying out
of $C_\delta$ we first introduce the bound/free structure of Sect.\ref{s2.4} by using $C_{\delta_0}$ in the place of $C_{\delta}$.
We then prove a result that is similar to Lemma \ref{reclem1},
from which Lemma \ref{outside} would follow directly. To make this
argument work, we first need
to show that the bound periods on $C_{\delta_0} \setminus
C_{\delta}$, as defined in Sect.\ref{s2.3}, are $\leq 
N$. Let $I_{p}(c)$ and the bound period be defined
the same as in Sect.\ref{s2.3}. 
\begin{lemma}\label{cover}
If $f = f_a$ is such that $a \in \Delta_{N}$, then
for each $c\in C$ we have $[c-\delta_0,c-\delta]\cup [c+\delta,c+\delta_0]\subset \bigcup_{1\leq p\leq N}I_{p}(c)$.
\end{lemma}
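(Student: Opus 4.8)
The plan is to show that the intervals $I_p(c)$ for $1 \le p \le N$ tile a one-sided neighborhood of $c$ whose outer radius is at least $\delta_0$ and whose inner radius is at most $\delta$. Since by construction $I_p(c) = f^{-1}[c_0 + D_{p-1}(c_0), c_0 + D_p(c_0))$ (say for a local minimum, the maximum case being symmetric), and since $f$ restricted to a one-sided neighborhood of $c$ is a diffeomorphism onto a one-sided neighborhood of $c_0$, the union $\bigcup_{1\le p\le N} I_p(c)$ is the $f$-preimage of $[c_0 + D_N(c_0), c_0 + D_1(c_0))$. So the statement reduces to two numerical facts about the nested radii $D_p(c_0)$: first, that $D_1(c_0)$ is large enough that the corresponding preimage reaches out past distance $\delta_0$ from $c$; and second, that $D_N(c_0)$ is small enough that the preimage of $c_0 + D_N(c_0)$ lies within distance $\delta$ of $c$. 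Throughout I would use the non-degeneracy of $c$ (assumption (iii) on $\Phi$) together with Lemma \ref{derivative}(c), which gives $|f''| \asymp L$ near $c$, so that $|f x - f c| \asymp L |x - c|^2$ for $x \in C_{\varepsilon_0}$; hence the $f$-preimage of $c_0 + D_p(c_0)$ sits at distance $\asymp \sqrt{D_p(c_0)/L}$ from $c$.

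For the outer bound I need $\sqrt{D_1(c_0)/L} \gtrsim \delta_0 = L^{-11/12}$, i.e. $D_1(c_0) \gtrsim L^{-5/6} = \sigma$. From \eqref{Theta}, $D_1(c_0) = L^{-1/2} d_0(c_0) = L^{-1/2} d_C(c_0) d_S(c_0) / J^0(c_0) = L^{-1/2} d_C(c_0) d_S(c_0)$ since $J^0 \equiv 1$. Because $a \in \Delta_N$ we have $c_0 = f(c) \notin C_\sigma \cup S_\sigma$ (this is the $i=-1$, or rather the $i=0$ shifted, instance; more precisely $f^{i+1}(C) \cap (C_\sigma \cup S_\sigma) = \emptyset$ for $0 \le i \le N$ covers $c_0$ via $i=0$ read as $c_0=f^{0+1}c$ — I should double-check the index convention, but the upshot is $d_C(c_0), d_S(c_0) \ge \sigma$), so $D_1(c_0) \ge L^{-1/2} \sigma^2 = L^{-1/2 - 1/3} = L^{-5/6}$, which is exactly $\sigma$ up to the constant. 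This gives outer radius $\gtrsim \sqrt{L^{-5/6}/L} = L^{-11/12}$ as required. For the inner bound I need $\sqrt{D_N(c_0)/L} \lesssim \delta = L^{-\alpha N}$, i.e. $D_N(c_0) \lesssim L^{1 - 2\alpha N}$; this is the content of the distortion–type estimate already packaged in Lemma \ref{boundlem2}, whose proof shows $D_{[\alpha N/2]}(c_0) \gg \delta$ but more to the point bounds $J^k(c_0)$ from below along $\Delta_N$, forcing $D_N(c_0) = L^{-1/2}(\sum_{i<N} d_i^{-1})^{-1}$ to be small; I would extract from there (or re-derive via Lemma \ref{derivative}(a) giving $J \le K_0 L/\sigma^2$ at worst near the orbit, since the orbit avoids $C_\sigma \cup S_\sigma$ up to time $N$) that $D_N(c_0) \le L^{-1/2}\sigma^2 (L/\sigma)^{-N}$ or similar, which is $\ll L \delta^2$ for $N$ large.

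The main obstacle — and it is a mild one — is keeping the constants and the index conventions straight: I must confirm that membership in $\Delta_N$ really does control $d_C$ and $d_S$ along the \emph{first $N$ iterates} of $c_0$ (so that Lemma \ref{derivative}(a) can be applied to bound each $J(c_i)$, $0 \le i < N$, by $K_0 L \sigma^{-2}$), and that $N$ has been taken large enough — in the sense of the remark after Lemma \ref{initial-1} that $N$ may be enlarged finitely often — that $L^{-1/2}\sigma^2 (L/\sigma)^{-N} \ll L^{1 - 2\alpha N}$, which holds for all large $L$ once $N$ is fixed since the left side decays like $L^{-(7/6)N}$ against $L^{-2\alpha N}$ with $2\alpha = 2\cdot 10^{-6} \ll 7/6$. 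Once these bookkeeping points are nailed down, the monotone nesting $D_N(c_0) < D_{N-1}(c_0) < \cdots < D_1(c_0)$ (immediate from \eqref{Theta}, as each partial sum is increasing in the number of terms) shows the preimages $I_p(c)$ are consecutive and exhaust $f^{-1}[c_0+D_N(c_0), c_0+D_1(c_0))$, and the two radius estimates place $[c+\delta, c+\delta_0]$ (and its mirror) inside this preimage, completing the proof.
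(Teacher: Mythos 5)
Your proposal takes essentially the same route as the paper: reduce the lemma to the two-sided estimate $L^{-1}D_{N}(c_0)<\delta^2<\delta_0^2<L^{-1}D_1(c_0)$, obtain the lower bound on $D_1$ from $c_0\notin C_\sigma\cup S_\sigma$, and obtain the upper bound on $D_N$ from the lower bound $J^{N-1}(c_0)\gtrsim(L\sigma)^{N-1}$ guaranteed by $a\in\Delta_N$. One small slip: the parenthetical aside invokes an \emph{upper} bound on $|f'|$ from Lemma~\ref{derivative}(a) to make $D_N$ small, but what is actually needed --- and what your main clause correctly identifies --- is the \emph{lower} bound $|f'c_i|\geq K_0^{-1}L\sigma$ from Lemma~\ref{derivative}(b), which makes $d_{N-1}(c_0)$, and hence $D_N(c_0)\leq L^{-1/2}d_{N-1}(c_0)$, exponentially small in $N$.
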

\begin{proof} Let $c_0 = fc$. It suffices to show
that
\begin{equation}\label{2.3.1}
L^{-1}D_{N}(c_0)<\delta^2<\delta_0^2 < L^{-1}D_1(c_0)
\end{equation}
where $D_n(c_0)$ is as in (\ref{Theta}).
Since $a\in\Delta_{N}$ we have $J^{N-1}(c_0)\geq
(KL\sigma)^{N-1}$ from Lemma \ref{derivative}. It then follows
that
$$
D_{N}(c_0)\leq d_{N-1}(c_0) \leq
(KL\sigma)^{-N+1}<L\delta^2.
$$
On the other hand we have
$$
D_1(c_0)=\frac{1}{\sqrt{L}}\cdot d_C(c_0)d_S(c_0)\geq
L^{-\frac{1}{2}}\sigma^2,
$$
from which the last inequality of (\ref{2.3.1}) follows directly.
\end{proof}

With the help of Lemma \ref{cover}, we know that the bound/free
structure for the orbit of $x$ out of $C_\delta$ is well-defined.
\begin{lemma}\label{initial}
Let $p\geq2$ be the bound period for $y\in
C_{\delta_0}\setminus C_\delta$. Then:
\begin{itemize}
\item[(a)] \ for $i\in[1,p]$, we have $d_C(f^{i}y)>\delta_0$ and
$|(f^{i-1})'(c_0)| D_{i}(c_0) \geq L^{-\frac12}\sigma^{2}$;

\item[(b)] $J^{p}(y)\geq K^{-1} L^{-\frac12}\sigma^2
|c-y|^{-1}\geq K^{-1} L^{-\frac12} \sigma^2
(K_0^{-1}L\sigma)^{p-2}$;

\item[(c)] $J^{p}(y) \geq L^{\frac{1}{300}p}.$
\end{itemize}
\end{lemma}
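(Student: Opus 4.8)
The plan is to run the same argument that proves Lemma \ref{reclem1}, but adapted to the auxiliary bound/free structure built from $C_{\delta_0}$ rather than $C_\delta$, and exploiting the fact that $a\in\Delta_N$ controls the critical orbit only up to time $N$ (which, thanks to Lemma \ref{cover}, is all we need: $p\le N$). First I would establish (a). The lower bound $d_C(f^i y)>\delta_0$ for $i\in[1,p]$ is precisely what says the bound period has not yet ended: by the definition of $I_p(c)$ and Lemma \ref{dist}, the orbit of $fy$ shadows that of $c_0$ for $p-1$ iterates, so $f^i y$ stays close to $c_i$, and since $p\le N$ and $a\in\Delta_N$ forces $c_i\notin C_\sigma\cup S_\sigma$, we get $d_C(f^i y)\gtrsim\sigma\gg\delta_0$ once $i\ge1$ (for $i$ in the shadowing range; for the very last step one uses that $p$ is the \emph{first} exit time). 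The estimate $|(f^{i-1})'(c_0)|D_i(c_0)\ge L^{-1/2}\sigma^2$ follows by the same reciprocal-sum computation as in Sublemma \ref{add-lem1-s2.3B}(b): write $J^{i-1}(c_0)d_j(c_0)=\frac{J^{i-1}(c_0)}{J^j(c_0)}d_C(c_j)d_S(c_j)$, bound $\frac{J^{i-1}(c_0)}{J^j(c_0)}\ge(K L\sigma)^{-(i-1-j)}\cdot 1$ crudely using only Lemma \ref{derivative} (valid since $i-1<N$ and $a\in\Delta_N$ keeps us out of $C_\sigma\cup S_\sigma$), and $d_C(c_j),d_S(c_j)\ge\sigma$, then sum over $j\in[0,i-1]$ and take reciprocals; the geometric sum is dominated by its largest term, giving the clean bound $L^{-1/2}\sigma^2$ after absorbing constants.

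Next, (b). Exactly as in the proof of Lemma \ref{reclem1}, split $J^p(y)=J^{p-1}(fy)\cdot J(y)$. By Lemma \ref{dist} and Lemma \ref{derivative}(c), $J^{p-1}(fy)\ge K^{-1}J^{p-1}(c_0)$ and $J(y)\ge K^{-1}L|c-y|$; multiplying and then using $y\in I_p(c)$, which gives $|c-y|^2\approx D_{p-1}(c_0)$ hence $|c-y|^{-1}D_p(c_0)\le |c-y|$ up to constants — more directly, $J(y)\ge K^{-1}L|c-y|\ge K^{-1}|c-y|^{-1}D_p(c_0)$ since $D_p(c_0)\le L|c-y|^2$. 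Combining, $J^p(y)\ge K^{-1}J^{p-1}(c_0)|c-y|^{-1}D_p(c_0)$, and inserting part (a) with $i=p$ yields $J^p(y)\ge K^{-1}L^{-1/2}\sigma^2|c-y|^{-1}$, the first inequality of (b). For the second inequality, use $|c-y|\le\delta_0$ together with the lower bound on $|c-y|$ coming from $y\notin I_{p-1}(c)$: since $y$ lies \emph{outside} $I_{p-1}(c)$, $|c-y|^2\ge D_{p-2}(c_0)\ge L^{-1/2}J^{p-2}(c_0)^{-1}\ge L^{-1/2}(K_0 L\sigma)^{-(p-2)}$ by Lemma \ref{derivative}, so $|c-y|^{-1}\ge(K_0 L\sigma)^{(p-2)/2}L^{1/4}$ — I would tune the exponent bookkeeping so that this delivers $J^p(y)\ge K^{-1}L^{-1/2}\sigma^2(K_0^{-1}L\sigma)^{p-2}$ as stated (the square on $|c-y|$ versus the linear factor needed is the one place to be careful; one uses that $p\ge2$ so $(K_0^{-1}L\sigma)^{p-2}\le (K_0^{-1}L\sigma)^{2(p-2)\cdot\frac12}$, i.e. take a square root of the stronger bound, which is legitimate since $K_0^{-1}L\sigma\ge1$).

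Finally, (c) is a pure arithmetic consequence of (b): with $\sigma=L^{-1/6}$, the right-hand side of (b) is $K^{-1}L^{-1/2-1/3}(K_0^{-1}L^{5/6})^{p-2}=K^{-1}L^{-5/6}(K_0^{-1})^{p-2}L^{\frac56(p-2)}$. For $p$ bounded (recall $p\le N$ is a fixed finite bound, and for small $p$ one checks directly using $J(y)\ge K^{-1}L|c-y|\ge K^{-1}L\delta$ that $J^p(y)\ge L^{p/300}$ since $L\delta=L^{1-\alpha N}\gg L^{1/300}$ for large $L$), and for $p$ large the dominant factor $L^{\frac56(p-2)}$ beats $(K_0)^{p-2}L^{p/300}$ once $L$ is large enough that $L^{5/6-1/300}>K_0$; a short case split on whether $p\le p_0$ or $p>p_0$ for a suitable $L$-independent $p_0$ closes it. The main obstacle is the bookkeeping in (b): matching the quadratic lower bound $|c-y|^2\gtrsim D_{p-2}(c_0)$ against the linear factor $L|c-y|$ in $J(y)$ without losing a square, which I handle by noting $K_0^{-1}L\sigma>1$ so passing to a square root only weakens the bound in a controlled, affordable way. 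Everything else is a transcription of the $C_\delta$-arguments of Section \ref{s2.3} to the $C_{\delta_0}$-setting, made legitimate by Lemma \ref{cover} (so $p\le N$) and by $a\in\Delta_N$ (so Lemma \ref{derivative}'s crude bounds apply along $c_0,\dots,c_{p-1}$).
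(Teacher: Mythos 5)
Your treatment of part (a), the first inequality of (b), and part (c) matches the paper's route and is sound: (a) is the analogue of Sublemma~\ref{add-lem1-s2.3B} with better constants because $a\in\Delta_N$ and $p\le N$ (via Lemma~\ref{cover}) keep the orbit out of $C_\sigma\cup S_\sigma$, the first bound in (b) follows from splitting off $J(y)\gtrsim L|c-y|$ and plugging in (a), and (c) is a case split on $p$ small versus $p$ large (the paper uses the threshold $p<10$ and $\delta_0^{-1}=L^{11/12}$).

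The gap is in the second inequality of (b), where the directions are reversed in a way that does not cohere. Membership $y\in I_p(c)$ is an \emph{upper} bound on the distance to the critical point: $|fy-c_0|<D_{p-1}(c_0)$, hence $L|c-y|^2\lesssim D_{p-1}(c_0)$. That upper bound on $|c-y|$ is precisely what produces the needed \emph{lower} bound on $|c-y|^{-1}$. You instead write $|c-y|^2\ge D_{p-2}(c_0)$ (wrong sense, wrong index, and missing the factor $L^{-1}$ from the quadratic behavior of $f$ near $c$), and then pass from this lower bound on $|c-y|$ to a lower bound on $|c-y|^{-1}$, which is logically inverted. The intermediate step $D_{p-2}(c_0)\ge L^{-1/2}J^{p-2}(c_0)^{-1}$ is also reversed: by the definition in (\ref{Theta}), $D_n(c_0)$ is $L^{-1/2}$ times the reciprocal of a sum and is therefore \emph{bounded above} by $L^{-1/2}d_{n-1}(c_0)\le L^{-1/2}J^{n-1}(c_0)^{-1}$. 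Finally, the proposed ``take a square root'' manipulation, $(K_0^{-1}L\sigma)^{p-2}\le(K_0^{-1}L\sigma)^{2(p-2)\cdot\frac12}$, is the identity and gains nothing; it does not actually trade the half power $(K_0^{-1}L\sigma)^{(p-2)/2}$ for the full power claimed. The repair is to run the chain from the correct side, as the paper does: $|c-y|\le D_{p-1}(c_0)\le L^{-1/2}d_{p-2}(c_0)\le L^{-1/2}(K_0^{-1}L\sigma)^{-(p-2)}$, which feeds into the first inequality of (b) with the right sign.
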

\begin{proof} (a) is a version of Sublemma
\ref{add-lem1-s2.3B}. The estimates are better because here we
have $f^{i-j}(c_j) > (KL\sigma)^{i-j}$, and $d_C(c_i), d_S(c_i)>
\sigma$. As for (b) we have
$$
J^{p}(y)\geq K^{-1} L |c-y|J^{p-1}(c_0)| \geq
K^{-1}
  |c -y|^{-1} J^{p-1}(c_0) D_{p}(c_0),
$$
where for the last inequality we use $D_{p}(c_0) \leq K L |c -
y|^2$ by the definition of $p$. The first inequality of (b)
then follows by using (a). The second inequality of (b) follows from 
$$|c-y|\leq D_{p-1}(c_0)\leq\frac{d_{p-2}(c_0)}{\sqrt{L}}
\leq\frac{(K_0^{-1}L\sigma)^{-p+2}}{\sqrt{L}}.$$
Here, last inequality is because $a\in\Delta_{N}$. 

If $p \geq 10$, then (c) is much weaker than
the second inequality of (b). If $p < 10$,
then (c) follows from the first inequality of (b) and the fact that
$
|c - y|^{-1} \geq \delta_0^{-1} =  L^{\frac{11}{12}}.
$
\end{proof}

We are in position to finish the proof of Lemma \ref{outside}.
If $f^nx$
is free (this includes the case $f^nx \in C_\delta$),
then we use Lemma \ref{initial}(c) for bound segments and $|f'| >
K^{-1} L^{\frac{1}{12}}$ for iterates in free segments, which are
 out of $C_{\delta_0}$.
This proves Lemma \ref{outside}(b). If $f^nx$ is bound, then
there is a drop of a factor $> \delta$ at the last free return
that can not be recovered. In this case we need the factor
$\delta$ in Lemma \ref{outside}(a). \qed


\begin{thebibliography}{[1]}
\bibitem{BC1} M. Benedicks and L. Carleson,
On iterations of $1-ax^2$ on $(-1, 1)$, {\it Ann. of Math.} {\bf 122}
(1985), 1-25.

\bibitem{BC2} M. Benedicks and L. Carleson,
The dynamics of the H\'enon map, {\it Ann. of Math.} {\bf 133}
(1991),  73-169.



\bibitem{GH} J. Guckenheimer and P. Holmes,
{\em Nonlinear oscillators, dynamical systems and bifurcations of
vector fields}, Springer-Verlag, Appl. Math. Sciences {\bf 42}
(1983).

\bibitem{J} M. Jakobson,
Absolutely continuous invariant measures for one-parameter
families of one-dimensional maps, {\it Comm. Math. Phys.} {\bf 81}
(1981), 39-88.

\bibitem{LT}
S. Luzzatto and W. Tucker, Non-uninformly expanding dynamics
in maps with criticalities and singularities, {\it Publ. Math.
I.H.E.S.} {\bf 89} (1999) 179-226.

\bibitem{LV}
S. Luzzatto and M. Viana, Positive Lyapunov exponents for
Lorenz-like families with criticalities, {\it Ast\'erisque.} {\bf 261}
(2000) 201-237.






\bibitem{PRV} M. J. Pacifico, A. Rovella, M. Viana,
Infinite-modal maps with global chaotic behavior, {\it Ann. of
Math.} {\bf 148} (1998), 441-484.



\bibitem{R} M. Rychlik,
Another proof of Jakoson's theorem and related results,
{\it Ergod. Th. $\&$ Dynam. Sys,} {\bf 8} (1988), 83-109.


\bibitem{SSTC1} L. Shilnikov, A. Shilnikov, D. Turaev, and L.
Chua, {\em Methods of qualitative theory in nonlinear dynamics.
Part I,} World Scientific Series on Nonlinear Science. Series A:
Monographs and Treatises, 4. World Scientific Publishing Co.,
Inc., River Edge, NJ, (1998).

\bibitem{SSTC2} L. Shilnikov, A. Shilnikov, D. Turaev, and L.
Chua, {\em Methods of qualitative theory in nonlinear dynamics.
Part II, } World Scientific Series on Nonlinear Science. Series A:
Monographs and Treatises, 5,  World Scientific Publishing Co.,
Inc., River Edge, NJ, (2001).

\bibitem{T} H. Takahasi, Statistical properties of nonuniformly expanding
1d maps with logarithmic singularities, submitted.

\bibitem{TTY} P. Thieullen, C. Tresser and L.-S. Young,
Positive Lyapunov exponent for generic one-parameter families of
one-dimensional maps, {\it Journal d'Analyse Math\'ematique} {\bf 64} (1994)
121-172.

\bibitem{T1} M. Tsujii,
A proof of Benedicks-Carleson-Jakobson theorem, {\it Tokyo J.
Math.} {\bf 16} (1993), 295-310.

\bibitem{T2} M. Tsujii,
Positive Lyapunov exponents in families of one-dimensional
dynamical systems, {\it Invent. Math.} {\bf 111} (1993), no. 1,
113-137.


\bibitem{W} Q.D. Wang, Periodically forced double homoclinic
loops to a dissipative saddle, Preprint.



\bibitem{WOk} Q.D. Wang and Ali, Oksasoglu, Periodic
occurrence of dynamical behavior of homoclinic tangles, {\it Physica D:
Nonlinear Phenomena,} {\bf 239}(7) (2011), 387-395.

\bibitem{WOk11} Q.D. Wang and Ali, Oksasoglu, Dynamics of 
homoclinic tangles in periodically perturbed second order
Equations, {\it J. Differential Equations,} {\bf 250} (2011), 710-751.

\bibitem{WO} Q.D. Wang and W. Ott, Dissipative homoclinic
loops and rank one chaos,  to appear in {\it Commun. Pure Appl. Math.}

\bibitem{WY1} Q.D. Wang and L.-S. Young, Nonuniformly Expanding 1D
Maps,  {\it Comm. Math. Phys.} {\bf 264}(1) (2006), 225-282


\bibitem{WY2} Q.D. Wang and L.-S. Young,
From invariant curves to strange attractors, {\it Comm. Math.
Phys.} {\bf 225} (2002), 275-304.

\bibitem{WY3} Q.D. Wang and L.-S. Young,
Strange attractors with one direction of instability,
{\it Comm. Math. Phys.} {\bf 218} (2001), 1-97.
\end{thebibliography}
\end{document}